\let\accentvec\vec
\let\vec\accentvec
 \journalname{Statistics and Computing}
\newcommand{\tr}{\operatorname{tr}}
\newcommand{\I}{\mathcal{I}}
\begin{document}

\title{Computing log-likelihood and its derivatives for restricted maximum likelihood\thanks{The research
was partially sponsored by the Engineering and
Physical Sciences Research Council (EPSRC, industry mathematics
knowledge transfer project, IM1000852) and is supported by National Natural Science of China (NSFC)(No.11501044), partially supported by (NSFC No. 11571002, 11571047, 1161049, 11671051, 61672003)}
}

\titlerunning{Computing log determinant and its derivatives }

\author{
Shengxin Zhu 
}


\institute{ \emph{Present address:} of Shengxin Zhu \at
Laboratory of Computational Physics, Institute of Applied Physics and
Computational Mathematics, \\ P.O.Box, 8009, Beijing 100088, P. R. China.
\\ \email{zhu\_shengxin@iapcm.ac.cn}.
\\(previous) Oxford Center for Collaborative and Applied
Mathematics \& Numerical Analysis Group, Mathematical Institute,
The University of Oxford. Andrew Wiles Building, Radcliffe
Observatory Quarter, Woodstock Road, Oxford, OX2 6GG UK.
        Tel: 1865 615147 \\
           Fax: +44-1865-273583 \\
\email{zhus@maths.ox.ac.uk.}           
}

\date{Received: Jan, 13, 2015 / Accepted: date}

\maketitle

\begin{abstract}
Recent large scale genome wide association analysis involves large scale linear mixed models.
Quantifying (co)-variance parameters in the mixed models with a restricted maximum likelihood method
results in a score function which is the first derivative of a log-likelihood. To obtain a statistically
efficient estimate of the variance
parameters, one needs to find the
root of the score function via the Newton method.  Most elements of the Jacobian matrix of the score involve a trace term of four parametric matrix-matrix multiplications. It is computationally prohibitively for large scale data sets.
By a serial matrix transforms and an averaged information splitting technique, an approximate Jacobian matrix can be obtained by splitting the average of the Jacobian matrix and its expected value. In the approximated Jacobian, its elements only involve Four matrix vector multiplications which can be efficiently evaluated by solving sparse linear systems with the multi-frontal factorization method.

\keywords{linear mixed
model \and REML \and $\log\det$ \and log likelihood, \and breeding model
\and genoselection. }
\subclass{MSC 65F05 \and    90C53.}
\end{abstract}


\newcommand{\var}{\operatorname{var}}

\section{Introduction}
Recent advance in genome-wide association study involves large scale linear mixed models \cite{lip11,lis12,zhang10,zhou12}.
Quantifying random effects in term of (co-)variance parameters in the linear mixed model is receiving increasing attention\cite{T08}. Common
random effects are blocks in experiments or observational studies that
are replicated across space or time \cite{Fisher1935,QK02}. Other random
effects like variation among individuals, genotypes and species also appear
frequently. In fact, geneticists and evolutionary biologists have long began
to notice the importance of quantifying magnitude of variation among
genotypes and spices due to environmental factors \cite{Hend59,MH08},
Ecologist recently are interested in the importance of random
variation in space and time, or among individual in the study of
population dynamics \cite{Bolker2008,PS03}. Similar problems also arises in estimating parameter
in high dimensional Gaussian distribution \cite{Daniel12}, functional
data analysis \cite{Bart14}, model selection analysis \cite{MSW13,YMO14} and many other applications \cite{RK88}.

Quantifying such random effects and making a statistical inference requires estimates of the co-variance parameters in the underlying model. The estimates are
usually obtained by maximizing a log-likelihood function which
often involves nonlinearly log-determinant terms. The first derivative of the log-likelihood is often referred to as a \emph{score function}. To maximize the log-likelihood, one requires to find the zeros of the score functions according to the conceptually simple Newton Method.  However, the Jacobian matrix of score function is very complicated (see \cite[p.825, eq.8]{zhou12},\cite[p.26, eq 11]{meyer96} and a derivation bellow). A remedy is the Fisher's scoring algorithm which uses the expectation of the Jacobian matrix in stead of the Jacobian matrix \cite{Longford1987}. The expect value of the Jacobian matrix is much simper than the Jacobian matrix but still involves a trace term of four matrix-matrix product. Such a trace term is computationally prohibitive for large data sets like those in genome wide association analysis. Therefore, effective way to evaluate the log-determinant terms and their derivative is of great interest.

Derivative free \cite{gras87} methods have been studied. They require less computational time per iteration, but they converge slow and require more iterates, especially for large scale problems \cite{misz93}. Comparisons in \cite{misz94b} shows that the derivative approach requires less time for most cases. That is why recent large scale genome wide association applications  \cite{lip11,lis12,zhang10,zhou12} and robust software development prefer the derivative approach. In this paper, we focus on a brute-fore approach to evaluate the elements of the Jacobian matrix. Simplified formula are obtained by a serial of matrix transforms and an averaged information splitting technique \cite{GTC95,john95,M97} which splits the Jacobian matrix and its expect value into two parts. The main part keeps the essential information and enjoys a simpler formula, the expectation of the other part which involves a lot of computations is negligible random zero matrix\cite{Z16,ZGL16}. Most elements of the approximated Jacobian matrix only involve a quadratic form of the observation vector. The quadratic from can be evaluated by four matrix-vector multiplications. The matrix vector multiplications are reduced to solve linear systems with multiple right hand sides, which can be efficiently solved by the multi-frontal or super-nodal $LDL^T$ factorization. These techniques enable the derivative Newton method applicable for high-throughput biological data \cite{WZW13}.

This paper provides a derivation for the derivatives of the log-likelihood. Since these results scattered in a serial of publications which are difficult for a new reader to track. We shall provide a detailed proof for this formulas to make the paper more readable. Based on this self-consistent derivation and some observations. We present efficient approaches to evaluate the log-likelihood methods and its second derivatives.  The evaluation of the likelihood relies on a $LDL^T$ factorization of a sparse matrix $C$. The order of $C$ is often less than the number of observations.  The evaluation of second derivatives of the likelihood finally reduced to solve many linear systems with the same coefficient matrix $C$. This supplies an alternative way to evaluate the derivatives as described in \cite{meyer96} which is based on automatic differentiation of the Cholesky algorithm \cite{smith95}.

\begin{table*}[t!]
\centering
 \caption{Comparison between the observed, Fisher and averaged splitting information}
 \label{tab:splitting}
\begin{tabular}{llll}
  \hline
  index & Jacobian & Expected Jacobian & Averaged Jacobian Splitting \\
   $-\frac{\partial^2 \ell_R}{\partial \theta_i \partial \theta_j} $    & $ \I_O$ & $\I $ & $\I_A$  \\
  $(\sigma^2, \sigma^2)$ &
  $\frac{y^TPy}{\sigma^6}-\frac{n-\nu}{2\sigma^4}$ & $\frac{n-\nu}{2\sigma^4}$
  & $\frac{y^TPy}{2\sigma^6}$  \\
  $(\sigma^2,\kappa_i)$ & $\frac{y^TPH_iPy}{2\sigma^4}$&
  $\frac{\tr(PH_i)}{2\sigma^2}$ &$\frac{y^TPH_iPy}{2\sigma^4}$\\
  $(\kappa_i,\kappa_j)$ &
 $\frac{\tr(PH_{ij})- \tr(PH_iP H_j)}{2} +\frac{2y^TPH_iPH_j Py -y^T
    PH_{ij}Py}{2\sigma^2}, $ & $\frac{\tr(PH_iPH_j)}{2}$
  & $\frac{y^TPH_iPH_jPy}{2\sigma^2}$\\
& $P=H^{-1}-H^{-1}X(X^TH^{-1}X)^{-1}X^TH^{-1}$ & $H_i=\frac{\partial H_i}{\partial \kappa_i}$, &
$H_{ij}=\frac{\partial^2H}{\partial \kappa_i\partial \kappa_j}$ \\
  \hline
\end{tabular}
\end{table*}

\section{Preliminary}

The basic model we considered is the widely used Linear Mixed Model(LMM).
\begin{equation}
y=X\tau+Zu+e. \label{eq:LMM}
\end{equation}
In the model, $y\in \mathbb{R}^{    n\times 1}$ is avector of observable
measurements
 $\tau\in \mathbb{R}^{p\times 1}$ is a vector of fixed
effects, $X \in \mathbb{R}^{n\times p}$ is a \textit{design
matrix} which corresponds to the fixed effects, $u \in
\mathbb{R}^{b\times 1}$ is a vector of random effects, $Z \in
\mathbb{R}^{n\times b}$ is a design matrix which corresponds
observations to the appropriate combination of random effects.
$e\in \mathbb{R}^{n\times 1}$ is the vector of residual errors.
The linear mixed model is an extension to the linear model
\begin{equation}
y=X\tau +e.    \label{eq:LM}
\end{equation}
LMM allows additional random components, $u$, as
correlated error terms, the linear mixed model is also referred to as \textit{linear mixed-effects models}. The term(s) $u$ can be
added level by level, therefore it is also referred to as\textit{hierarchal models}. It brings a wider range of
\textit{variance structures and models} than the linear model in
\eqref{eq:LM} does. For instance, in most cases, we suppose that
the random effects, $u$, and the residual errors, $e$, are
multivariate normal distributions such that $E(u)=0$, $E(e)=0$,
$u\sim N(0, \sigma^2 G)$, $e\sim N(0, \sigma^2 R)$ and
\begin{equation}
\text{var}\left[\begin{array}{c}
u\\
e
\end{array}\right]=\sigma^{2}\left[\begin{array}{cc}
G(\gamma) & 0\\
0 & R(\phi)
\end{array}\right],
\end{equation}
where $G\in \mathbb{R}^{b\times b }$, $R \in \mathbb{R}^{n\times
n}$. We shall denote $\kappa=(\gamma; \phi)^T.$
Under these
assumptions, we have
\begin{align}
y \vert  u &\sim N(X\tau+Zu, \sigma^2 R), \\
y &\sim N(X\tau, \sigma^2(R+ZGZ^T)):= N(X\tau, V(\theta)),
\end{align}
where $\theta=(\sigma^2;\kappa)^T$.
When the co-variance matrices $G$ and $R$ are known, one can obtain the \emph{Best Linear Unbiased Estimators} (BLUEs), $\hat{\tau}$, for the fixed effects and the \emph{Best Linear Unbiased Prediction} (BLUP), $\tilde{u}$, for the random effects according to the maximum likelihood method, the Gauss-Markov-Aitiken least square \cite[\S 4.2]{Rao}. $\hat{\tau}$ and $\tilde{u}$ satisfy the following mixed model equation\cite{Hend59}
\begin{equation}
\begin{pmatrix}
X^TR^{-1}X  & X^T R^{-1}Z \\
Z^TR^{-1}X  & Z^TR^{-1}Z+G^{-1}
\end{pmatrix}\begin{pmatrix}
\hat{\tau}  \\ \tilde{u}
\end{pmatrix}
=\begin{pmatrix}
X^TR^{-1}y \\ Z^T R^{-1}y
\end{pmatrix}.
\label{eq:mme}
\end{equation}
For such a forward problem, confidence or uncertainty of the estimations of the fixed and random effects can be quantified in term of co-variance of the estimators and the predictors
\begin{equation}
\mathrm{var} \begin{pmatrix}
\hat{\tau} \\
\tilde{u} -u
\end{pmatrix}
 =\sigma^2 C^{-1},
\label{eq:pre}
\end{equation}
where $C$ is the coefficient matrix in the mixed model equation \eqref{eq:mme}.

In many other more realistic and interesting cases. The variance parameter $\theta$ is unknown and to be estimated. This paper focuses on these cases.
One of the commonly used methods to estimate these parameters is
the maximum likelihood principle. In this approach, one starts with the distribution of the random vector $y$.
The variance of $y$ in the linear mixed model \eqref{eq:LMM} is
\begin{equation}
V = \operatorname{var}(y) 
    =\sigma^{2}(R+ZGZ^{T}):=\sigma^2H(\kappa),
\end{equation}
and the likelihood function of $y$ is
\begin{multline}
L(\tau,\theta)=
\prod_{i=1}^n (2\pi)^{-\frac{n}{2}} |V(\theta)|^{-\frac{1}{2}} \times \\
\exp\left\{-\frac{1}{2}(y-X\tau)^TV(\theta)^{-1}(y-X\tau) \right\}.
\end{multline}
Since the logarithmic transformation is monotonic, it is equivalent to maximize
$\log L(\tau, \theta)$ instead of $L(\tau,\sigma^2)$.  The log-likelihood function is
\begin{multline}
\log L(\tau,\theta)=-\frac{1}{2}\left\{n\ln(2\pi)+\ln|V(\theta)| +\epsilon^TV(\theta)\epsilon \right\}
\end{multline}
where $\epsilon=y-X\tau$. A maximum likelihood estimates for the variance parameter $\theta$ is
$$
\hat{\theta}=\arg_{\theta}\max \log L(\tau,\theta).
$$
The maximum likelihood estimate£¬$\hat{\sigma}^2$£¬for the variance parameter is asymptotically approaching to the true value,  $\sigma^2$, however, the
bias is relative large for finite observations with relative many effective fixed effects. Precisely
$$
\mathrm{Bias}(\hat{\sigma}^2,\sigma^2)=\frac{\nu}{n} \sigma^2,
$$
where $\nu=\mathrm{rank}(X)$.

A remedy to remove or at least reduce such a bias is the Restricted Maximum Likelihood (REML) \cite{PT71}, which is also referred to as \textit{marginal maximum likelihood} method or \textit{REsidual
Maximum Likelihood} method. In the framework of REML, the observation $y$ is divided into two (orthogonal) components: one of the
component of $y$ contains all the (fitted) residual error information
in the linear mixed model \eqref{eq:LMM}. Employing the maximum likelihood on the two orthogonal components results in two smaller problems (compared with the ML estimation). The partition is constructed as
follows.
\newcommand{\rank}{\operatorname{rank}}
For any $X\in
\mathbb{R}^{n\times p}$, there exist a linear transformation
$L=[L_1, L_2]$, such that $L_1^TX=I_p$ and $L_2^TX=0$ (See \cite{Verbyla1990}
Theorem \ref{thm:L}). Use this transform, we obtain

\begin{equation}
L^Ty=\begin{pmatrix}
y_1 \\ y_2
\end{pmatrix}
\sim
N\left( \begin{pmatrix}
\tau \\0
\end{pmatrix}, \sigma^2
\begin{pmatrix}
L_1^THL_1 & L_1^THL_2 \\
L_2^THL_1 & L_2^THL_2
\end{pmatrix}
 \right).
 \label{eq:y1y2}
\end{equation}
The marginal distribution of $y_2$ is given as
\cite[p40, Thm
2.44]{all04}
$$y_2=L_2^Ty\sim N(0,\sigma^2L_2^THL_2).$$
The associated log-likelihood function corresponding to $y_2$ is
\begin{multline}
\ell_R= \ell_2=\ell(\sigma^2,\kappa)
=-\frac{1}{2}\{(n-\nu)\log(2\pi \sigma^2)   \\
+\log|L_2^TH(\kappa)L_2|+y^TL_2(L_2^TH(\kappa)L_2)^{-1}L_2^Ty)/\sigma^2 \}.
\label{eq:l2}
\end{multline}
The REML estimate for the variance parameter is
$$
\hat{\theta}^{\mathrm{REML}}=\arg_{\theta}\max\ell_R(\theta).
$$
Such an estimate removes redundant
freedoms which are used in estimating the fixed effects. The fixed effects are determined by maximizing
the log-likelihood function of $y_1$.

The first derivatives of a log-likelihood function is referred to as the \emph{score function}. The REML estimate,if exists,
is a zero of the \textit{score} function, which can be approximated iteratively via the
Newton-Raphson method in Algorithm \ref{alg:NR}.

\begin{algorithm}[!t]
\caption{Newton-Raphson method to solve {$S(\theta)=0$.}}
\begin{algorithmic}[1]
\State {Give an initial guess of $\theta_0$}
\For{ $k=0, 1, 2, \cdots$ until convergence }
 \State{Solve $J_S(\theta_k) \delta_k=-S(\theta_k)$,// $J_s$ is the Jacobian matrix}
 \State{$\theta_{k+1}=\theta_k+\delta_k$}
\EndFor
\end{algorithmic}
\label{alg:NR}
\end{algorithm}

\section{Scores and its derivatives for REML}

\label{sec:comput}
\subsection{Closed formula for the restricted log-likelihood}
The restricted log-likelihood given in \eqref{eq:l2} involves an intermediate matrix $L_2$.  We shall give a closed formula of $\ell_R$ which is only related to the design matrix $X, Z$ and variance matrix $R$ and $G$.
This form is equivalent to the widely cited form:
\begin{theorem}
The residual log-likelihood for the linear model in
\eqref{eq:l2} is equivalent to
\begin{align}
\ell_{R} =& -\frac{1}{2}\left\{ (n-\nu)\log ( \sigma^2)+\log |H|
+\log|X^TH^{-1}X| \right\}   \nonumber \\
 &-\frac{1}{2}y^TPy/\sigma^2 +\mathrm{const} .
\end{align}
where $ H=R+ZGZ^T$ and
\begin{equation}
P=H^{-1}-H^{-1}X(X^TH^{-1}X)^{-1}X^TH^{-1}.\label{eq:PHX}
\end{equation}

\end{theorem}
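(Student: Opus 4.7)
The plan is to reduce the expression \eqref{eq:l2} to the claimed closed form by separately rewriting the log-determinant term $\log|L_2^T H L_2|$ and the quadratic term $y^T L_2(L_2^T H L_2)^{-1} L_2^T y$, so that both depend only on $H$ and $X$ (and not on the auxiliary transformation $L=[L_1,L_2]$). The factor $-\tfrac{1}{2}(n-\nu)\log(2\pi)$ and anything involving $|L|$ alone is parameter-free and will be swept into the \emph{const} term.

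For the determinant identity, I would first assemble the $n\times n$ matrix $L^T H L$ in block form and take determinants two different ways. A Schur-complement expansion along the $(2,2)$-block gives
\begin{equation*}
|L|^{2}|H|=|L^{T}HL|=|L_{2}^{T}HL_{2}|\cdot|S|,
\end{equation*}
where $S = L_1^T H L_1 - L_1^T H L_2(L_2^T H L_2)^{-1} L_2^T H L_1$. To evaluate $|S|$, I would use that the Schur complement of $L_2^T H L_2$ inside $L^T H L$ equals the inverse of the $(1,1)$-block of $(L^T H L)^{-1}$. Now the relations $L_1^T X = I_p$ and $L_2^T X = 0$ mean $L^T X = \binom{I_p}{0}$, so the first $p$ columns of $L^{-T}$ are exactly $X$; writing $(L^T H L)^{-1} = L^{-1}H^{-1}L^{-T}$ then shows that its $(1,1)$-block is $X^T H^{-1} X$. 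Hence $|S|=|X^T H^{-1} X|^{-1}$ and
\begin{equation*}
\log|L_{2}^{T}HL_{2}| = \log|H|+\log|X^{T}H^{-1}X|+2\log|L|,
\end{equation*}
which contributes the required log-determinant terms plus a constant.

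For the quadratic form I would show the projector identity $L_2(L_2^T H L_2)^{-1} L_2^T = P$ directly, without re-invoking block inversion. The idea is to check that both sides agree on a convenient basis: since $H$ is invertible and $[X,L_2]$ spans $\mathbb{R}^{n}$, the columns of $[HX,HL_2]$ also span $\mathbb{R}^n$, so it suffices to verify that both matrices send $HX$ to $0$ and $HL_2$ to $L_2$. The left-hand side does so because $L_2^T X = 0$; the right-hand side does because $PX = H^{-1}X - H^{-1}X(X^T H^{-1} X)^{-1}(X^T H^{-1} X) = 0$ and $PHL_2 = L_2 - H^{-1}X(X^T H^{-1} X)^{-1} X^T L_2 = L_2$. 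Pre-multiplying and post-multiplying by $y^T$ and $y$ then yields $y^T L_2(L_2^T H L_2)^{-1} L_2^T y = y^T P y$. Substituting the two reductions back into \eqref{eq:l2} delivers the stated closed form.

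The step I expect to require the most care is the identification of the $(1,1)$-block of $(L^T H L)^{-1}$ with $X^T H^{-1} X$; it is where the special choice of $L$ (through $L^T X = \binom{I}{0}$) enters, and it is what produces the $\log|X^T H^{-1} X|$ correction term that distinguishes REML from ordinary ML. Everything else is bookkeeping of constants and block algebra.
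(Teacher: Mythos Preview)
Your overall strategy matches the paper's: both reduce the determinant via a Schur-complement factorization of $L^{T}HL$ and both invoke the projector identity $L_{2}(L_{2}^{T}HL_{2})^{-1}L_{2}^{T}=P$ for the quadratic term. The paper imports these two facts from its appendix (where $S=(X^{T}H^{-1}X)^{-1}$ is verified by expanding $L_{1}^{T}HP HL_{1}$ directly, and the projector identity is obtained by the substitution $\hat X=H^{-1/2}X$ reducing to the $H=I$ case), whereas you derive the first by reading off the $(1,1)$-block of $(L^{T}HL)^{-1}=L^{-1}H^{-1}L^{-T}$ and the second by checking agreement on a spanning set. Your route is a little more self-contained and avoids the square-root-of-$H$ device; the paper's route packages the two facts as reusable lemmas it cites again later.

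One slip to fix in the projector step: the spanning set you actually verify against is $[X,\,HL_{2}]$, not $[HX,\,HL_{2}]$. Your own checks confirm this --- you compute $PX=0$ (not $P(HX)$) and use $L_{2}^{T}X=0$ (not $L_{2}^{T}HX$), and on the other side you correctly evaluate both operators on $HL_{2}$. The set $[X,\,HL_{2}]$ does span $\mathbb{R}^{n}$ (if $Xa+HL_{2}b=0$, premultiplying by $L_{2}^{T}$ gives $L_{2}^{T}HL_{2}\,b=0$, hence $b=0$, hence $a=0$), so the argument goes through once the basis is stated correctly. With $[HX,\,HL_{2}]$ as written, neither side is obviously zero on the first block and the justification you give does not apply.
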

\begin{proof}
 First we notice that $P=L_2(L_2^THL_2)^{-1}L_2$ (see
Appendix \eqref{eq:L2HL2}), and (Appendix \eqref{eq:XHX})
 \begin{equation}
 (X^TH^{-1}X)^{-1}=L_1^THL_1-L_1^{T}HL_2(L_2^THL_2)^{-1}L_2^THL_1.
 \end{equation}
Then we use the identity
\begin{align*}
 & \left\vert \begin{pmatrix}
I_p & -(L_1^THL_2)(L_2^THL_2)^{-1} \\
0 &I_{n-p}
\end{pmatrix}
\begin{pmatrix}
L_1^THL_1 & L_1^THL_2 \\
L_2^THL_1 & L_2^THL_2
\end{pmatrix} \right\vert \\
& =\left\vert
\begin{pmatrix}
(X^TH^{-1}X)^{-1} & 0\\
L_2^THL_1 & L_2^THL_2
\end{pmatrix}\right\vert=\vert L^THL \vert,
\end{align*}
We have $|L^THL|=|H||L^TL|=|(X^TH^{-1}X)^{-1}||L_2^THL_2|$ and
\begin{equation}
\log|L^TL|+\log|H|=\log|L_2^THL_2^T|-\log|X^TH^{-1}X|.
\end{equation}
Note the construction of $L$ does not depend on $\sigma^2$ and
$\phi$, therefore $\log|L^TL|$ is a constant. \flushright \qed
\end{proof}

\subsection{The score functions for residual log-likelihood}

\begin{theorem}[\cite{GTC95}]
Let $X\in \mathbb{R}^{n\times p}$ be full rank in the linear mixed model \eqref{eq:LMM} and the restricted log-likelihood function be given as $\ell_R(\theta)$ in \eqref{eq:l2}.  The scores of the residual
log-likelihood $\ell_R$ are given by
\begin{align}
s(\sigma^2)&=\frac{\partial \ell_R}{\partial \sigma^2}
=-\frac{1}{2}\left\{
\frac{n-\nu}{\sigma^2}-\frac{y^TPy}{\sigma^4}\right \}, \\
s(\kappa_i)&=\frac{\partial \ell_R}{\partial
\kappa_i}=-\frac{1}{2}\left\{ \tr(P \dot{H}_i) -
\frac{1}{\sigma^2}y^TP \dot{H}_iP y  \right\},
\end{align}
where $\dot{H}_i=\frac{\partial{H}_i}{\partial \kappa_i}$.

\end{theorem}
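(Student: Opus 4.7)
The plan is to differentiate the closed-form expression for $\ell_R$ obtained in the preceding theorem, namely
\[
\ell_R = -\tfrac{1}{2}\bigl\{(n-\nu)\log\sigma^2 + \log|H| + \log|X^TH^{-1}X|\bigr\} - \tfrac{1}{2\sigma^2}y^TPy + \text{const},
\]
and observe that $H$, and hence $P$, depends on $\kappa$ only, while $\sigma^2$ appears explicitly. The $\sigma^2$-derivative is then immediate: only the $(n-\nu)\log\sigma^2$ term and the $\sigma^{-2}$ factor of the quadratic form contribute, and collecting them gives $s(\sigma^2)$ directly.

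For $s(\kappa_i)$ I would work with the two standard matrix-calculus identities $\partial_{\kappa_i}\log|A| = \operatorname{tr}(A^{-1}\dot A_i)$ and $\partial_{\kappa_i}A^{-1} = -A^{-1}\dot A_i A^{-1}$. Applying the first to $\log|H|$ yields $\operatorname{tr}(H^{-1}\dot H_i)$. For $\log|X^TH^{-1}X|$, setting $A=X^TH^{-1}X$ and using the second identity to compute $\dot A_i = -X^TH^{-1}\dot H_i H^{-1}X$, then cycling inside the trace, gives $-\operatorname{tr}\!\bigl(H^{-1}X A^{-1}X^TH^{-1}\dot H_i\bigr)$. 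Adding these two contributions and recognising the definition of $P$ in \eqref{eq:PHX} collapses the sum into the single clean term $\operatorname{tr}(P\dot H_i)$.

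The step I expect to be the main obstacle is the derivative of the quadratic form $y^TPy$, because $P$ is a composite of three $\kappa$-dependent blocks. Here I would prove the auxiliary identity
\[
\frac{\partial P}{\partial \kappa_i} = -P\,\dot H_i\,P,
\]
by differentiating the four factors in $P = H^{-1} - H^{-1}X(X^TH^{-1}X)^{-1}X^TH^{-1}$ term by term, pairing the pieces with and without the projector $H^{-1}X A^{-1}X^TH^{-1}$, and then factoring $(H^{-1} - H^{-1}XA^{-1}X^TH^{-1}) = P$ on both sides. The algebra is verbose but purely mechanical; the key combinatorial observation is simply that each surviving group already has a copy of $P$ on the left and on the right after one factors out $\dot H_i$ in the middle.

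Once that identity is in hand, $\partial_{\kappa_i}(y^TPy) = -y^T P\dot H_i P y$, so the contribution from the quadratic form to the score is $+\tfrac{1}{2\sigma^2}y^TP\dot H_iPy$. Combining this with $-\tfrac{1}{2}\operatorname{tr}(P\dot H_i)$ from the log-determinant terms gives the claimed formula for $s(\kappa_i)$. A minor bookkeeping point to flag is that the full-rank assumption on $X$ is needed precisely so that $X^TH^{-1}X$ is invertible and the identities above make sense; otherwise the derivation is entirely differential-calculus on matrices.
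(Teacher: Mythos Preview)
Your argument is correct, but the paper takes a different route that avoids most of the algebra you anticipate. Rather than differentiating the closed form in terms of $H$ and $X^TH^{-1}X$, the paper works directly with the original residual likelihood \eqref{eq:l2} together with the identity $P=L_2(L_2^THL_2)^{-1}L_2^T$ from Theorem~\ref{thm:d}. Because $L_2$ does not depend on $\kappa$, the single log-determinant $\log|L_2^THL_2|$ differentiates immediately to $\tr\bigl((L_2^THL_2)^{-1}L_2^T\dot H_iL_2\bigr)=\tr(P\dot H_i)$ by one trace cycle, and the key identity $\partial P/\partial\kappa_i=-P\dot H_iP$ follows in a single line from $\partial_{\kappa_i}(L_2^THL_2)^{-1}=-(L_2^THL_2)^{-1}L_2^T\dot H_iL_2(L_2^THL_2)^{-1}$. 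Your approach instead differentiates $\log|H|$ and $\log|X^TH^{-1}X|$ separately and then recombines them into $\tr(P\dot H_i)$, and obtains $\partial P/\partial\kappa_i$ by product-rule expansion of the four-factor expression in \eqref{eq:PHX}; this works, but it is exactly the ``verbose but mechanical'' step you flag. The trade-off is that the paper's route is shorter and makes the structure $\dot P_i=-P\dot H_iP$ transparent, while yours stays entirely in the $X,H$ variables and never invokes the auxiliary transformation $L_2$.
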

\begin{proof}
 First, according to Theorem~\ref{thm:d}, We know that $P=L_2(L_2^THL_2)^{-1}L_2^T)$.
Consider the residual loglikelihood function in \eqref{eq:l2},
It is obvious for the score for $\sigma^2$.
\begin{equation}
s(\kappa_i)=\frac{\partial\log |L_2^THL_2|}{\partial \kappa_i}+
\frac{1}{\sigma^2}\frac{\partial (y^TPy)}{\partial \kappa_i}.
\label{eq:sk}
\end{equation}
 Using the fact on matrix derivatives of log determinant\cite[p.305, eq.8.6]{Har97}
 $$\frac{\partial \log|A|}{\partial
\kappa}=\tr(A^{-1}\frac{\partial A}{\partial \kappa_i})$$
and the property of the trace
operation $\tr(AB)=\tr(BA)$
\begin{align}
&\frac{\partial \log(\lvert L_2^THL_2\rvert)}{\partial \kappa_i} =
\tr\left((L_2HL_2)^{-1} \frac{\partial (L_2^THL_2)}{\partial
\kappa_i}\right)
\nonumber \\
&=\tr\left( \underbrace{L_2(L_2^THL_2)^{-1} L_2^T}_{=P}
\dot{H}_i\right)=\tr\left(P\dot{H}_i\right).
\end{align}
 One easy way to calculate the second term in \eqref{eq:sk} is
obtained by applying the result on
on matrix derivatives of the inverse of a matrix
\cite[p.307,eq.8.15]{Har97}
\begin{equation*}
\frac{\partial A^{-1}}{ \partial \kappa_i}
=-A^{-1}\frac{\partial A}{\partial \kappa_i} A^{-1}.
\end{equation*}
We have
\begin{align}
\frac{\partial P}{\partial \kappa_i} &=\frac{\partial (L_2(L_2^THL_2)^{-1}L_2^T)}{\partial
\kappa_i}=L_2\frac{\partial (L_2^THL_2)^{-1}}{\partial
\kappa_i}L_2^T  \nonumber \\
& =-L_2(L_2^THL_2)^{-1}\frac{\partial (L_2^THL_2)}{\partial
\kappa_i} (L_2^THL_2)^{-1}L_2^T \nonumber \\
& =-\underbrace{L_2(L_2^THL_2)^{-1}L_2^T}_{=P}\dot{H}_i
\underbrace{L_2(L_2^THL_2)^{-1}L_2^T}_{=P}
\nonumber \\& =-P\dot{H}_iP
=\dot{P}_i \label{eq:dpxh}.
\end{align}
\flushright \qed
\end{proof}

\subsection{Jacobian of the score}
The negative of the Hessian matrix of a log-likelihood function, or the negative Jacobian of the score function, is often refereed to as the
\textit{observed information matrix},
\begin{equation}
\I_o=  -
\begin{pmatrix}
 \frac{\partial \ell_R}{\partial \sigma^2\partial \sigma^2} &
 \frac{\partial \ell_R}{\partial \sigma^2 \partial \kappa_i} &
 \cdots &\frac{\partial \ell_R}{\partial \sigma^2 \partial \kappa_j}
 &
 \\
\frac{\partial \ell_R}{\partial \kappa_i\partial \sigma^2 }
&\frac{\partial \ell_R}{\partial \kappa_i\partial \kappa_i} &
\cdots &
\frac{\partial \ell_R}{\partial \kappa_i\partial \kappa_j} \\
\vdots

 & \vdots & \ddots & \vdots
 \\
\frac{\partial \ell_R}{\partial \kappa_j\partial \sigma^2}

 & \cdots & \frac{\partial \ell_R}{\partial \kappa_j\partial \kappa_i}
 & \frac{\partial \ell_R}{\partial \kappa_j\partial \kappa_j}
\end{pmatrix}.
\end{equation}
In term of the observed information matrix, line 3 in Algorithm \ref{alg:NR} reads
as $\I_{o} \delta_k=S(\theta_k).$
\begin{theorem}[\cite{GTC95}]
Elements of the observed information matrix for the residual
log-likelihood \eqref{eq:l2} are given by
\begin{align}
\I_o(\sigma^2,\sigma^2) &=
\frac{y^TPy}{\sigma^6}-\frac{n-p}{2\sigma^4}, \label{eq:ISS} \\
\I_o(\sigma^2,\kappa_i) &=
\frac{1}{2\sigma^4}y^TP\dot{H}_iPy , \label{eq:ISK}\\
\I_o(\kappa_i,\kappa_j) & = \frac{1}{2}\left\{\tr(P\dot{H}_{ij})-
\tr(P\dot{H}_iP \dot{H}_j)
 \right\}  \nonumber  \\
 & +\frac{1}{2\sigma^2}\left\{ 2y^TP\dot{H}_iP\dot{H}_j Py -y^T
 P\ddot{H}_{ij}Py\right\}.
\label{eq:IKK}
\end{align}
where $\dot{H}_i=\frac{\partial H}{\partial \kappa_i}$,
$\ddot{H}_{ij}=\frac{\partial^2 H}{\partial K_i \partial K_j}$.
 \end{theorem}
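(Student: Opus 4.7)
The plan is to obtain each block of $\I_o$ by differentiating the score functions from the preceding theorem with respect to the variance parameters, making repeated use of the identity $\partial P/\partial \kappa_i = -P\dot H_i P$ derived in \eqref{eq:dpxh}. The $(\sigma^2,\sigma^2)$ entry is the most direct: differentiating $s(\sigma^2) = -\tfrac{1}{2}\bigl((n-\nu)/\sigma^2 - y^TPy/\sigma^4\bigr)$ with respect to $\sigma^2$ and negating yields \eqref{eq:ISS}. For the $(\sigma^2,\kappa_i)$ entry I would differentiate $s(\sigma^2)$ with respect to $\kappa_i$; only $y^TPy$ depends on $\kappa_i$, so the derivative is $y^T(\partial P/\partial \kappa_i)y/(2\sigma^4) = -y^TP\dot H_iPy/(2\sigma^4)$, and negation gives \eqref{eq:ISK}.

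The real work is the $(\kappa_i,\kappa_j)$ block. I would start from $s(\kappa_i) = -\tfrac{1}{2}\tr(P\dot H_i) + \tfrac{1}{2\sigma^2}y^TP\dot H_iPy$ and differentiate each piece with respect to $\kappa_j$. The trace term expands, using the product rule on $P$ and $\dot H_i$, into
\begin{equation*}
\tfrac{\partial}{\partial \kappa_j}\tr(P\dot H_i) = \tr\!\bigl((\partial P/\partial \kappa_j)\dot H_i\bigr) + \tr\!\bigl(P\ddot H_{ij}\bigr) = -\tr(P\dot H_jP\dot H_i) + \tr(P\ddot H_{ij}),
\end{equation*}
which, after the cyclic property of the trace, supplies the first curly bracket in \eqref{eq:IKK}. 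The quadratic form $y^TP\dot H_iPy$ has three $\kappa_j$-dependent factors; applying the product rule produces two ``$\partial P/\partial \kappa_j$'' contributions and one ``$\ddot H_{ij}$'' contribution, namely $-y^TP\dot H_jP\dot H_iPy + y^TP\ddot H_{ij}Py - y^TP\dot H_iP\dot H_jPy$.

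The one step that needs a brief justification is the consolidation of the two cross terms. Because $P$, $\dot H_i$ and $\dot H_j$ are all symmetric, the transpose of the scalar $y^TP\dot H_jP\dot H_iPy$ equals $y^TP\dot H_iP\dot H_jPy$, so the two cross contributions coincide and combine into $-2y^TP\dot H_iP\dot H_jPy$. Assembling everything with the outer $-\tfrac{1}{2}$ from the score and the overall minus sign in $\I_o = -\partial^2\ell_R$, and identifying $\dot H_{ij}$ with $\ddot H_{ij}$, produces formula \eqref{eq:IKK}. The principal obstacle I anticipate is purely bookkeeping: tracking the chain of sign flips coming from $\partial P/\partial \kappa_i = -P\dot H_iP$ together with the outer negations, and correctly accounting for the three factors hit by the product rule inside the quadratic form; no analytic ingredient beyond \eqref{eq:dpxh} and the derivative formulas for $\log\det$ and matrix inverses is required.
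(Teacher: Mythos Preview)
Your proposal is correct and follows essentially the same route as the paper's own proof: both differentiate the score using the key identity $\partial P/\partial \kappa_j=-P\dot H_jP$ from \eqref{eq:dpxh}, apply the product rule to $\tr(P\dot H_i)$ and to $y^TP\dot H_iPy$, and then invoke the symmetry of $P,\dot H_i,\dot H_j$ to merge the two cross terms in the quadratic form. Your write-up is in fact more explicit about the sign bookkeeping than the paper's, but the argument is the same.
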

 \begin{proof}
 The formulas in \eqref{eq:ISS} and \eqref{eq:ISK} and are standard.
 The first term in \eqref{eq:IKK} follows by applying the result in \eqref{eq:dpxh},
 \begin{align*}
 \frac{\partial \tr(P\dot{H}_i)}{\partial
 \kappa_j}&=tr(P\ddot{H}_{ij})+\tr(\frac{\partial P}{\partial \kappa_j}\dot{H}_i) \quad (\frac{\partial P}{\partial \kappa_j}=-PH_jP)\\
 &=\tr(P\ddot{H}_{ij})-\tr(P\dot{H}_jP\dot{H}_i).
 \end{align*}
 The second term in \eqref{eq:IKK} follows because of
 the result in \eqref{eq:dpxh},
 we have
 \begin{equation}
 -\frac{\partial (P\dot{H}_iP)}{\partial
 \kappa_j}=P\dot{H}_jP\dot{H}_iP-P\ddot{H}_{ij}P+P\dot{H}_iP\dot{H}_jP.
 \end{equation}
 Further note that $\dot{H}_i$, $\dot{H}_j$ and $P$ are symmetric.
The second term in \eqref{eq:IKK} follows because of
 $$y^TP\dot{H}_iP\dot{H}_jPy=y^TP\dot{H}_jP\dot{H}_iPy. $$

  \flushright \qed
 \end{proof}

The elements \eqref{eq:IKK} in the observed information matrix, the negative Jacobian matrix of the score, involve the trace product of four matrices. It is computationally prohibitive for large data
set. Therefore it is necessary to approximate the Jacobian matrix for efficiency.

\subsection{The Fisher information approximation to the negative Jacobian}
The \textit{Fisher information matrix}, $\I$, is the
expect value of the observed information matrix, $ \I=E(\I_o).$
The Fisher information matrix has a simpler form than the
observed information matrix and provides essential information on the observations, and thus it
is a nature approximation to the observed information matrix.

\begin{theorem}[\cite{GTC95}]
Elements of the Fisher information matrix for the residual
log-likelihood function in \eqref{eq:l2} are given by
\begin{align}
\I(\sigma^2,\sigma^2) &=E(\I_o(\sigma^2,\sigma^2))=\frac{\tr(PH)}{2\sigma^4}=\frac{n-p}{2\sigma^4}, \label{eq:FISS}\\
\I(\sigma^2, \kappa_i)
&=E(\I_o(\sigma^2,\kappa_i))=\frac{1}{2\sigma^2}
\tr(P\dot{H}_i),\label{eq:FISK} \\
\I(\kappa_i, \kappa_j)
&=E(\I_o(\kappa_i,\kappa_j))=\frac{1}{2}\tr(P\dot{H}_iP\dot{H}_j)
.\label{eq:FIKK}
\end{align}
\end{theorem}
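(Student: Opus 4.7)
The plan is to take term-by-term expectations of the observed-information formulas from the previous theorem. The whole computation rests on three algebraic identities for the projector
$$P=H^{-1}-H^{-1}X(X^TH^{-1}X)^{-1}X^TH^{-1}:$$
namely $PX=0$ (immediate from the definition), and its two consequences $PHP=P$ and $\tr(PH)=n-p$. The first of these is verified by writing $PH=I_n-H^{-1}X(X^TH^{-1}X)^{-1}X^T$ and multiplying on the right by $P$, whereupon $X^TP=0$ (the transpose of $PX=0$) kills the second term. The trace identity follows from the same expression together with cyclicity: $\tr(PH)=n-\tr(I_p)=n-p$.

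Given $y\sim N(X\tau,\sigma^2 H)$, the standard quadratic-form identity gives $E(y^TAy)=\sigma^2\tr(AH)+\tau^TX^TAX\tau$. In every quadratic form that appears in the Jacobian, $A$ is either $P$ itself or has the sandwich form $PMP$, so $AX=0$ and the mean contribution drops out. For $\I(\sigma^2,\sigma^2)$ I would use $E(y^TPy)=\sigma^2\tr(PH)=\sigma^2(n-p)$, substitute into the observed-information entry, and collect: $(n-p)/\sigma^4-(n-p)/(2\sigma^4)=(n-p)/(2\sigma^4)$, which also equals $\tr(PH)/(2\sigma^4)$. For $\I(\sigma^2,\kappa_i)$ I would compute $E(y^TP\dot{H}_iPy)=\sigma^2\tr(P\dot{H}_iPH)=\sigma^2\tr(P\dot{H}_i)$, using $PHP=P$ and cyclicity of the trace; dividing by $2\sigma^4$ yields $\tr(P\dot{H}_i)/(2\sigma^2)$.

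The third identity $\I(\kappa_i,\kappa_j)$ is the only one that requires genuine bookkeeping, and it is the main obstacle such as one exists. Taking expectations of the two quadratic forms in the observed-information entry produces $\sigma^2\tr(P\dot{H}_iP\dot{H}_j)$ and $\sigma^2\tr(P\ddot{H}_{ij})$, in each case after applying $PHP=P$ and cyclicity to collapse the trailing factor of $H$. Dividing by $2\sigma^2$ and assembling all four resulting trace terms, the two copies of $\tr(P\ddot{H}_{ij})$ cancel (one from the deterministic part with coefficient $\tfrac12$, one from the quadratic-form expectation with coefficient $-\tfrac12$), while the two copies of $\tr(P\dot{H}_iP\dot{H}_j)$ combine with signs $-\tfrac12$ and $+1$ to leave $\tfrac12\tr(P\dot{H}_iP\dot{H}_j)$. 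The cancellation is straightforward once the identities are in hand, but is worth tracking carefully because a sign slip here would destroy the symmetry and positive-semidefiniteness of the Fisher information matrix.
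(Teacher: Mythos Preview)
Your proposal is correct and follows essentially the same route as the paper: both proofs use $PX=0$ to kill the mean contribution in the quadratic-form expectations, invoke $PHP=P$ together with trace cyclicity to collapse the trailing $H$, and then track the cancellations in the $(\kappa_i,\kappa_j)$ entry exactly as you describe. The only minor difference is in establishing $\tr(PH)=n-p$: the paper uses the representation $P=L_2(L_2^THL_2)^{-1}L_2^T$ and cyclicity to get $\tr(I_{n-p})$, whereas you read it off directly from $PH=I_n-H^{-1}X(X^TH^{-1}X)^{-1}X^T$ and cyclicity to get $n-\tr(I_p)$, which is arguably cleaner since it avoids the auxiliary $L_2$ machinery.
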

\begin{proof} The formulas can be found in \cite{GTC95}. Here we supply alternative proof.
First note that
\begin{align}
PX&=H^{-1}X-H^{-1}X(X^TH^{-1}X)^{-1}XH^{-1}X=0. \nonumber \\
PE(yy^T)&=P(\sigma^2H-X\tau (X\tau)^T)=\sigma^2PH.
\label{eq:PEyy}
\end{align}
Then
\begin{align}
E(y^TPy)&=E(\tr(Pyy^T))=\tr(PE(yy^T)) =\sigma^2\tr(PH) \label{eq:Eypy}\\
&=\sigma^2\tr(L_2(L_2^THL_2)^{-1}L_2^TH) \nonumber \\
&=\sigma^2\tr((L_2^THL_2)^{-1}L_2^THL_2) \nonumber \\
&=\sigma^2\rank(L_2)=(n-p)\sigma^2. \nonumber
\end{align}
where $\rank(L_2)=n-\rank(X)$ due to $L_2^TX=0$.
Therefore
\begin{equation}
E(\I_o(\sigma^2,\sigma^2))=\frac{E(y^TPy)}{\sigma^6}-\frac{n-\nu}{2\sigma^4}=\frac{n-\nu}{2\sigma^4}.
\end{equation}
Second, we  notice that $PHP=P$. Apply the procedure in
\eqref{eq:Eypy}, we have
\begin{align}
E(y^TP\dot{H}_iPy)&= \tr(P\dot{H}_iPE(yy^T))=\sigma^2\tr(P\dot{H}_iPH)\nonumber \\
&=\sigma^2\tr(PHP\dot{H}_i) =\sigma^2\tr(P\dot{H}_i),  \label{eq:yphpy}\\
E(y^TP\dot{H}_iP\dot{H}_jPy) &=
\sigma^2\tr(P\dot{H}_iP\dot{H}_jPH)=\sigma^2\tr(PHP\dot{H}_iP\dot{H}_j) \nonumber \\
& =\sigma^2\tr(P\dot{H}_iP\dot{H}_j), \label{eq:yphphpy}\\
E(y^TP\ddot{H}_{ij}Py)&=\sigma^2\tr(P\ddot{H}_{ij}PH)=\sigma^2\tr(P\ddot{H}_{ij}).
\label{eq:yphhpy}
\end{align}
Substitute \eqref{eq:yphpy} into \eqref{eq:ISK}, we obtain
\eqref{eq:FISK}.   Substitute \eqref{eq:yphphpy} and
\eqref{eq:yphhpy} to \eqref{eq:IKK}, we obtain \eqref{eq:FIKK}.
 \flushright \qed
\end{proof}

Using the Fishing information matrix as an approximate to the negative Jacobian result in the
famous Fisher-scoring algorithm \cite{Longford1987}, which is widely used in machine learning.
\begin{algorithm}[!t]
\caption{Fisher scoring algorithm to estimate the variance
parameters}
\begin{algorithmic}[1]
\State {Give an initial guess of $\theta_0$} \For{ $k=0, 1, 2,
\cdots$ until convergence }
 \State{Solve $\I(\theta_k) \delta_k=S(\theta_k)$}
 \State{$\theta_{k+1}=\theta_k+\delta_k$}
\EndFor
\end{algorithmic}
\label{alg:FS}
\end{algorithm}

\subsection{Averaged information approximation to the negative Jacobian}
To avoid the trace term in the information matrix, the authors in \cite{GTC95}
suggest to use an average of the information matrix and the observed
information matrix. The \textit{average information matrix} is
constructed as follows
\begin{align}
\I_A(\sigma^2,\sigma^2)&=\frac{1}{2\sigma^6}y^TPy;  \label{eq:ASS}\\
\I_A(\sigma^2,\kappa_i)&=\frac{1}{2\sigma^4}y^TP\dot{H}_iPy;\label{eq:ASK}\\
\I_A(\kappa_i,\kappa_j)&=\frac{1}{2\sigma^2}y^TP\dot{H}_iP\dot{H}_jPy;
\label{eq:AKK}
\end{align}
In the paper \cite{GTC95}, the authors give an explanation of that average
information matrix can be viewed the average of the observed
information matrix and the Fisher information. This is true when the matrix $H(\kappa)$ has a linear structure with the parameter $\kappa$, say, $H=\sum V_i \kappa_i$. Then $\ddot{H}_{ij}=0$.
However in general $\I_A$ is not the average of the observed and expected information but only a main part of it. The following theorem gives a more precise and concise
mathematical explanation \cite{ZGL16,Z16}.
\begin{theorem}
Let $\I_O$ and $\I$ be the observed information matrix and the Fisher information matrix for the residual
log-likelihood of the linear mixed model respectively, then the average of the observed and the Fisher information can be split as $\frac{\I_O+\I}{2}=\I_A+ \I_Z$, such that the expectation of $\I_A$ is the Fisher information matrix and $E(\I_{Z})=0$.  \label{thm:S}
\end{theorem}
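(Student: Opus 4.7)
The plan is to verify the splitting element-by-element on the three block types of the information matrix (the $(\sigma^2,\sigma^2)$, $(\sigma^2,\kappa_i)$, and $(\kappa_i,\kappa_j)$ entries), using the explicit formulas already established in the preceding theorems together with the expectation identities $E(y^T P y)=\sigma^2\,\tr(PH)=(n-\nu)\sigma^2$, $E(y^T P \dot H_i P y)=\sigma^2\,\tr(P\dot H_i)$, $E(y^T P \dot H_i P \dot H_j P y)=\sigma^2\,\tr(P\dot H_i P \dot H_j)$, and $E(y^T P \ddot H_{ij} P y)=\sigma^2\,\tr(P\ddot H_{ij})$, all of which were derived in the proof of the Fisher information theorem from $PHP=P$, $PX=0$ and $P E(yy^T)=\sigma^2 PH$.

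First I would dispatch the easy half: $E(\I_A)=\I$. Taking the entry-wise expectations of \eqref{eq:ASS}, \eqref{eq:ASK}, \eqref{eq:AKK} and substituting the four identities above immediately reproduces \eqref{eq:FISS}, \eqref{eq:FISK}, \eqref{eq:FIKK}. This step is essentially a bookkeeping exercise and contains no genuine obstacle, but it establishes the first conclusion of the theorem and fixes the notation used in the second half.

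Next I would define $\I_Z:=\tfrac{1}{2}(\I_O+\I)-\I_A$ and compute it block by block from the explicit formulas in \eqref{eq:ISS}--\eqref{eq:IKK} and \eqref{eq:FISS}--\eqref{eq:FIKK}. For the $(\sigma^2,\sigma^2)$ block a direct cancellation shows $\I_Z(\sigma^2,\sigma^2)=0$, so the expectation is trivially zero. For the $(\sigma^2,\kappa_i)$ block the algebra gives
\begin{equation*}
\I_Z(\sigma^2,\kappa_i)=\tfrac{1}{4\sigma^2}\tr(P\dot H_i)-\tfrac{1}{4\sigma^4}y^T P\dot H_i P y,
\end{equation*}
whose expectation vanishes by the second identity. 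For the $(\kappa_i,\kappa_j)$ block, the half-Fisher term $\tfrac{1}{4}\tr(P\dot H_i P\dot H_j)$ cancels exactly against the corresponding term coming from $\I_O$, leaving
\begin{equation*}
\I_Z(\kappa_i,\kappa_j)=\tfrac{1}{4}\tr(P\ddot H_{ij})-\tfrac{1}{4\sigma^2}y^T P\ddot H_{ij} P y,
\end{equation*}
whose expectation vanishes by the fourth identity. Assembling the three blocks yields $E(\I_Z)=0$ and hence the claimed splitting.

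The only place that requires a bit of care is the $(\kappa_i,\kappa_j)$ block, because $\I_O$ contains four terms (two trace terms and two quadratic forms in $y$) and one must track which pieces are absorbed into $\I_A$, which into $\I_Z$, and which cancel against the Fisher half. In particular it is the second-derivative terms involving $\ddot H_{ij}$ — not the $\dot H_i P\dot H_j$ terms — that survive in $\I_Z$, which makes transparent the remark preceding the theorem that $\I_A$ coincides with $\tfrac{1}{2}(\I_O+\I)$ exactly when $H$ is linear in $\kappa$ (so that $\ddot H_{ij}=0$). Beyond this blockwise accounting the argument is purely mechanical, so I do not anticipate any deeper obstacle.
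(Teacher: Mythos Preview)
Your proposal is correct and follows essentially the same route as the paper: both compute $\I_Z=\tfrac{1}{2}(\I_O+\I)-\I_A$ block by block, obtain exactly the three expressions you wrote (including the key observation that only the $\ddot H_{ij}$ terms survive in the $(\kappa_i,\kappa_j)$ block), and then verify $E(\I_A)=\I$ and $E(\I_Z)=0$ using the expectation identities \eqref{eq:Eypy}, \eqref{eq:yphpy}, \eqref{eq:yphphpy}, \eqref{eq:yphhpy}. The only cosmetic difference is ordering: you establish $E(\I_A)=\I$ first and then treat $\I_Z$, whereas the paper interleaves the two checks per block.
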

\begin{proof}
Let the elements of $\I_A$ are defined as in \eqref{eq:ASS} to \eqref{eq:AKK}
then we have
\begin{align}
\I_Z(\sigma^2,\sigma^2)&=0, \\
\I_Z(\sigma^2,\kappa_i)&=\frac{tr(P\dot{H}_i)}{4\sigma^2}-\frac{y^TP\dot{H}_iPy}{4\sigma^4}, \\
\I_Z(\kappa_i,\kappa_j)& =\frac{\tr(PH_{ij})-y^TPH_{ij}Py/\sigma^2}{4}.
\end{align}
Apply the result in \eqref{eq:Eypy}, we have
\begin{equation}
E(\I_A(\sigma^2,\sigma_2))=\frac{(n-p)}{2\sigma^4}=\I(\sigma^2,\sigma^2).
\end{equation}
Apply the result in \eqref{eq:yphpy}, we have
\begin{equation}
E(\I_A(\sigma^2,\kappa_i))=\frac{\tr(P\dot{H}_i)}{2\sigma^2} \text{ and }E(\I_Z(\sigma^2,\kappa_i))=0.
\end{equation}
Apply the result in \eqref{eq:yphphpy}, we have
\begin{equation}
E(\I_A(\kappa_i,\kappa_j))=\frac{\tr(P\dot{H}_iP{H}_j)}{2}=\I(\kappa_i,\kappa_j)
\end{equation}
and $E(\I_Z(\kappa_i,\kappa_j))=0$.
\qed
\end{proof}
Similar to the Fisher information matrix, Theorem \ref{thm:S} indicates that
the averaged information splitting matrix is a good approximation to the observed
information and can be used as an alternative Fisher information matrix.
\begin{algorithm}[!t]
\caption{Average information(AI) algorithm  to solve
{$S(\theta)=0$.}}
\begin{algorithmic}[1]
\State {Give an initial guess of $\theta_0$} \For{ $k=0, 1, 2,
\cdots$ until convergence }
 \State{Solve $\I_A(\theta_k) \delta_k=S(\theta_k)$,}
 \State{$\theta_{k+1}=\theta_k+\delta_k$}
\EndFor
\end{algorithmic}
\label{alg:AI}
\end{algorithm}

\section{Computing issues}
\subsection{Computing elements of approximated Jacobian}

Compare $\I_A $ with $\I_O$, and $\I_F$ in Table \ref{tab:splitting}, in contrast with $\I_{O}(\kappa_i,\kappa_j)$ which involves 4 matrix-matrix products, The counter part of the negative of approximated Jacobian only involves a quadratic term, which can be evaluated by four matrix-vector multiplications and an inner product as in Algorithm~\ref{alg:IKK}.This provide a simple formula. Still the matrix vector multiplication of $Py$ involves the inverse of the $H$ which is of order $n\times n$. When the observations is greater than the number of fixed and random effects, say $n>p+b$, we can obtain a much simpler matrix vector multiplication as $R^{-1}e$, where $e$ is the fitted residual $e= y-X\hat{\tau} -Z\tilde{u}$.

\begin{algorithm}
\caption{Compute $\I_A(\kappa_i,\kappa_j)=\frac{y^TP\dot{H}_iP\dot{H}_jPy}{2\sigma_2}$}
\label{alg:IKK}
\begin{algorithmic}[1]
\State{ $\xi =Py$ }
\State{ $\eta_i =H_i \xi$; $\eta_j=H_j\xi$};
\State{ $\zeta= P\eta_j $}
\State{ $\I_A(\kappa_i ,\kappa_j)=\frac{\eta_i^T \xi}{2\sigma^2}$}
\end{algorithmic}
\end{algorithm}
 We introduce the following lemma.
\begin{lemma}[{\cite[Fact 2.16.21]{B09}}]
Let $H=R+ZGZ^T$, then
$$
H^{-1}  =R^{-1} -R^{-1}Z(Z^TR^{-1}Z+G^{-1})^{-1} Z^TR^{-1}.
$$
\label{lem:H-1}
\end{lemma}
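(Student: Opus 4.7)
The statement is the Sherman--Morrison--Woodbury identity specialised to $H=R+ZGZ^T$, so the cleanest strategy is direct verification: compute $H\,H^{-1}$ with the claimed expression on the right and check that it collapses to the identity. I prefer this over invoking a general inversion lemma because it makes the paper self-contained and the algebra is short once the right abbreviation is introduced.

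The plan is to set $M=Z^TR^{-1}Z+G^{-1}$, which is the $b\times b$ ``capacitance'' matrix that appears in the quoted formula, and then expand
\begin{equation*}
(R+ZGZ^T)\bigl[R^{-1}-R^{-1}ZM^{-1}Z^TR^{-1}\bigr].
\end{equation*}
Distributing gives four terms; the two that are not already equal to $I$ or of the form $Z(\cdot)Z^TR^{-1}$ can be regrouped by factoring $Z$ on the left and $Z^TR^{-1}$ on the right. What remains to show is that the $b\times b$ middle bracket
\begin{equation*}
-M^{-1}+G-GZ^TR^{-1}Z\,M^{-1}
\end{equation*}
vanishes. Using the defining identity $GM=GZ^TR^{-1}Z+I$, so that $GZ^TR^{-1}Z=GM-I$, this bracket simplifies to $-M^{-1}+G-(GM-I)M^{-1}=0$, which proves $HH^{-1}=I$.

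For the proof to be complete I would also note that invertibility of $M$ (and hence well-posedness of the right-hand side) follows from the standing assumption that $G$ and $R$ are positive definite, since then $Z^TR^{-1}Z\succeq 0$ and $G^{-1}\succ 0$ imply $M\succ 0$. No substantive obstacle is expected: the only thing to watch is the bookkeeping of the four expanded terms and the sign on the cross-terms. I would present the calculation on a single display with the cancellation made explicit, and finish with a one-line remark that an analogous check of $H^{-1}H=I$ is identical by symmetry, or is automatic since $H$ is square and we have exhibited a right inverse.
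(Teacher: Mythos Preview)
Your proposal is correct: the direct verification via $M=Z^TR^{-1}Z+G^{-1}$ and the cancellation $-M^{-1}+G-(GM-I)M^{-1}=0$ is clean and complete, and your remark on positive-definiteness of $M$ properly justifies the well-posedness of the formula.

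However, the paper does not actually prove this lemma at all. It is stated with a citation to Bernstein's \emph{Matrix Mathematics} (Fact~2.16.21) and no proof is given in the text; the lemma is simply invoked as a known identity. So your approach is not so much different from the paper's as it is \emph{more} than the paper provides: you supply a self-contained argument where the paper relies on an external reference. If the goal is self-containment, your direct multiplication is the standard and appropriate route; there is nothing to compare on the paper's side.
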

\begin{lemma}
The inverse of the matrix $C$ in \eqref{eq:mme} is given by
\[
C^{-1} =
\begin{pmatrix}
A& B \\
B^T & D
\end{pmatrix}^{-1}=
\begin{pmatrix}
C^{XX}  & C^{XZ} \\
C^{ZX} & C^{ZZ} \\
\end{pmatrix}
\]
where
\begin{align}
C^{XX} & =(X^TH^{-1}X)^{-1},\\
C^{XZ} & =-C^{XX}X^TR^{-1}ZD^{-1}, \\
C^{ZX} &= -D^{-1} Z^TR^{-1}XC^{XX}, \\
C^{ZZ} & =D^{-1}+C_{ZZ}^{-1}Z^TR^{-1}XC^{XX}X^TR^{-1} Z^TD^{-1}.
\end{align}
\label{lem:C1}
\end{lemma}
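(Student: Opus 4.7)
The plan is to apply the standard block inverse formula via the Schur complement, then identify the Schur complement with $X^TH^{-1}X$ using Lemma~\ref{lem:H-1}. Writing $C=\begin{pmatrix}A & B\\ B^T & D\end{pmatrix}$ with $A=X^TR^{-1}X$, $B=X^TR^{-1}Z$, and $D=Z^TR^{-1}Z+G^{-1}$, I observe first that $D$ is invertible (since $G^{-1}$ is positive definite and $Z^TR^{-1}Z$ is positive semidefinite), so the Schur complement $S=A-BD^{-1}B^T$ is well defined, and the textbook block-inversion identity gives
\begin{equation*}
C^{-1}=\begin{pmatrix} S^{-1} & -S^{-1}BD^{-1} \\ -D^{-1}B^TS^{-1} & D^{-1}+D^{-1}B^TS^{-1}BD^{-1}\end{pmatrix}.
\end{equation*}

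The main step I would carry out is the identification $S=X^TH^{-1}X$. Substituting the definitions of $A$, $B$, $D$,
\begin{equation*}
S=X^TR^{-1}X-X^TR^{-1}Z\bigl(Z^TR^{-1}Z+G^{-1}\bigr)^{-1}Z^TR^{-1}X,
\end{equation*}
and multiplying Lemma~\ref{lem:H-1} on the left by $X^T$ and on the right by $X$ yields exactly this expression, so $S=X^TH^{-1}X$ and hence $S^{-1}=(X^TH^{-1}X)^{-1}=C^{XX}$. This is really the only nontrivial ingredient; the rest is bookkeeping.

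Once $S^{-1}=C^{XX}$ is in hand, I read off the remaining blocks directly from the Schur formula: $C^{XZ}=-S^{-1}BD^{-1}=-C^{XX}X^TR^{-1}ZD^{-1}$, $C^{ZX}=(C^{XZ})^T=-D^{-1}Z^TR^{-1}XC^{XX}$ (using symmetry of $C^{XX}$ and $D$), and $C^{ZZ}=D^{-1}+D^{-1}B^TS^{-1}BD^{-1}=D^{-1}+D^{-1}Z^TR^{-1}XC^{XX}X^TR^{-1}ZD^{-1}$, which matches the stated formula (modulo the evident typographical slips in the exponents of the last block).

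The only potential obstacle is justifying invertibility of $S=X^TH^{-1}X$, which is needed for the Schur-complement identity to apply. This follows from the full-rank assumption on $X$ (made explicitly in the preceding theorem) together with positive definiteness of $H=R+ZGZ^T$, and so presents no real difficulty. I would therefore simply remark on it in one sentence before concluding.
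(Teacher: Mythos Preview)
Your proposal is correct and follows essentially the same route as the paper: invoke the standard $2\times 2$ block-inverse formula via the Schur complement $S=A-BD^{-1}B^T$, then use Lemma~\ref{lem:H-1} to identify $S$ with $X^TH^{-1}X$, after which the remaining blocks are immediate. The paper's proof is in fact terser---it cites the block-inverse fact and verifies only the $C^{XX}$ identification---so your added remarks on invertibility of $D$ and $S$ are a welcome clarification rather than a deviation.
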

\begin{proof}
According to Fact~\cite[Fact 2.17.3]{B09} on $2\times 2$ partitioned matrix, $C^{-1}$ is given by
$$
\begin{pmatrix}
S^{-1}   & -S^{-1}BD^{-1} \\
-D^{-1} B^TS^{-1} & D^{-1}B^TS^{-1}BD^{-1}+D^{-1}.
\end{pmatrix}
$$
where $S=A-BD^{-1}B^T$. So we only need to prove
\begin{align*}
C^{XX} & =((X^TR^{-1}X)^{-1}-(X^TR^{-1}Z) D^{-1}(Z^TR^{-1}X))^{-1} \\
 & = (X^T \underbrace{(R^{-1}-R^{-1}Z(Z^TR^{-1}Z+G^{-1})^{-1}Z^TR^{-1})}_{H^{-1}} X)^{-1} \\
 & =(X^TH^{-1}X)^{-1}.
\end{align*}
\qed
\end{proof}

We shall prove the following results
\begin{theorem}
Let $P$ be defined in \eqref{eq:PHX}, $\hat{\tau}$ and $\tilde{u}$ be the solution to \eqref{eq:mme}, and $e$ be the residual $e=y-X\hat{\tau}-Z\tilde{u}$, then
\begin{align}
P&=H^{-1} -H^{-1}X(X^TH^{-1}X)^{-1}X^TH^{-1} \\
 & =R^{-1} -R^{-1}WC^{-1}W^TR^{-1} \label{eq:P2}
\end{align}
where $W=[X,Z]$ is the design matrix for the fixed and random effects and
\[
 Py=R^{-1} e.
\]
\end{theorem}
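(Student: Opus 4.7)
The proof splits naturally into two parts: establishing the alternative representation \eqref{eq:P2} of $P$, and then deriving $Py=R^{-1}e$ as an easy corollary through the mixed model equations.

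For the identity $P=R^{-1}-R^{-1}WC^{-1}W^TR^{-1}$, my plan is a direct block-matrix expansion driven by Lemma~\ref{lem:H-1} and Lemma~\ref{lem:C1}. Set $D=Z^TR^{-1}Z+G^{-1}$ so that Lemma~\ref{lem:H-1} gives $H^{-1}=R^{-1}-R^{-1}ZD^{-1}Z^TR^{-1}$; in particular $H^{-1}X=R^{-1}X-R^{-1}ZD^{-1}Z^TR^{-1}X$. Substituting this into the definition \eqref{eq:PHX} produces $P$ as a sum of four terms built from $R^{-1}X$, $R^{-1}Z$, $D^{-1}$ and $C^{XX}=(X^TH^{-1}X)^{-1}$. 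On the other side, writing $W=[X,Z]$ and using the block partition of $C^{-1}$ supplied by Lemma~\ref{lem:C1}, the product $R^{-1}WC^{-1}W^TR^{-1}$ expands as
\begin{equation*}
R^{-1}X\,C^{XX}X^TR^{-1}+R^{-1}X\,C^{XZ}Z^TR^{-1}+R^{-1}Z\,C^{ZX}X^TR^{-1}+R^{-1}Z\,C^{ZZ}Z^TR^{-1}.
\end{equation*}
Plugging in $C^{XZ}=-C^{XX}X^TR^{-1}ZD^{-1}$, $C^{ZX}=-D^{-1}Z^TR^{-1}XC^{XX}$, and $C^{ZZ}=D^{-1}+D^{-1}Z^TR^{-1}XC^{XX}X^TR^{-1}ZD^{-1}$ yields five terms; matching them against $R^{-1}ZD^{-1}Z^TR^{-1}+H^{-1}XC^{XX}X^TH^{-1}$ shows the two expressions agree.

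For the second claim, the mixed model equation \eqref{eq:mme} is exactly $C(\hat{\tau};\tilde{u})^T=W^TR^{-1}y$, so $(\hat{\tau};\tilde{u})^T=C^{-1}W^TR^{-1}y$ and therefore $X\hat{\tau}+Z\tilde{u}=W(\hat{\tau};\tilde{u})^T=WC^{-1}W^TR^{-1}y$. Consequently
\begin{equation*}
R^{-1}e=R^{-1}y-R^{-1}WC^{-1}W^TR^{-1}y=\bigl(R^{-1}-R^{-1}WC^{-1}W^TR^{-1}\bigr)y=Py,
\end{equation*}
where the last equality uses the identity just proved.

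The only real obstacle is the bookkeeping in the block expansion: there are five contributions after substituting the formulas from Lemma~\ref{lem:C1}, and one must see that three of them reassemble into $H^{-1}XC^{XX}X^TH^{-1}$ via the factorization $H^{-1}X=(I-R^{-1}ZD^{-1}Z^T)R^{-1}X$, while the remaining term provides $R^{-1}ZD^{-1}Z^TR^{-1}$, which together with $R^{-1}$ reproduces $H^{-1}$. Once this combinatorial matching is done carefully, both identities follow with no further difficulty.
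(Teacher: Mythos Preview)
Your proposal is correct and follows essentially the same route as the paper: both expand $R^{-1}WC^{-1}W^TR^{-1}$ using the block structure of $C^{-1}$ from Lemma~\ref{lem:C1}, regroup the resulting terms via $H^{-1}=R^{-1}-R^{-1}ZD^{-1}Z^TR^{-1}$ from Lemma~\ref{lem:H-1} to recover $H^{-1}-H^{-1}X(X^TH^{-1}X)^{-1}X^TH^{-1}$, and derive $Py=R^{-1}e$ directly from \eqref{eq:mme} written as $C(\hat{\tau};\tilde{u})^T=W^TR^{-1}y$. The only difference is presentation order (the paper states the $Py=R^{-1}e$ consequence first, assuming \eqref{eq:P2}), which is immaterial.
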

\begin{proof} Suppose \eqref{eq:P2} hold, then
\begin{align}
Py &=R^{-1}y-R^{-1}W \underbrace{C^{-1}W^TR^{-1} y}_{(\hat{\tau}^T, \tilde{u}^T)^T} \\
  & =R^{-1}(y -X\hat{\tau} -Z\tilde{u}) =R^{-1}e
\end{align}
\begin{align*}
&  R^{-1} -R^{-1}WC^{-1}W^TR^{-1} \\
=& R^{-1}
-R^{-1}(X, Z) \begin{pmatrix} C^{XX} &  C^{XZ}  \\ C^{ZX} & C^{ZZ}
\end{pmatrix} \begin{pmatrix} X^T \\Z^T \end{pmatrix} R^{-1}. \\
= & R^{-1} -R^{-1}\{ XC^{XX}X^T -XC^{XZ}Z -ZC^{ZX} + ZD^{-1}Z \\
& + Z(C_ZZ^{-1}Z^TR^{-1}XC^{XX}X^TR^{-1} Z^TD^{-1})Z^T \}R^{-1} \\
 = &\underbrace{R^{-1} -R^{-1}ZD^{-1}Z^TR^{-1}}_{H^{-1}}  \\
 & -(R^{-1} -R^{-1}ZD^{-1}Z^TR^{-1}) XC^{XX}X^{T}H^{-1} \\
= & H^{-1} -H^{-1}X(X^TH^{-1}X)^{-1}X^TH^{-1}.
\end{align*} \qed
\end{proof}
From above results, we find out that evaluating the matrix vector $Py$ is equivalent the solve the linear system \eqref{eq:mme}
\begin{equation}
\begin{pmatrix}
X^TR^{-1}X  & X^T R^{-1}Z \\
Z^TR^{-1}X  & Z^TR^{-1}Z+G^{-1}
\end{pmatrix}\begin{pmatrix}
\hat{\tau}  \\ \tilde{u}
\end{pmatrix}
=\begin{pmatrix}
X^TR^{-1}y \\ Z^T R^{-1}y
\end{pmatrix}.
\end{equation}
and then evaluate the weighted residual $R^{-1}e$. Notice that the matrix $P \in \mathbb{R}^{n\times n}$. On contrast, $C \in \mathbb{R}^{(p+b)\times (p+b)}$ where $p+b$ is the number of fixed effects and random effects. This number $p+b$ is much smaller than the number of observations $n$. In each nonlinear iterations, the matrix $C$  can be pre-factorized for evaluating $\xi=Py$. And then use the factors to solve $\zeta_i=P\eta_i$ (by solving the mixed equations with multiple right hand sides).

Before discuss technique details for efficient factorization, we shall demonstrate a by-product, an efficient formula for evaluating the restricted log-likelihood function.

\subsection{Evaluating the log-likelihood}
 An observation is that $\log \lvert C\rvert $ is ready when a $LDL^T$ factorization is obtained. And the second observation is that in many cases, the covariance matrix $G$ for the random effects and $R$ for the residual enjoy diagonal or block diagonal structures. Therefore $\log \lvert R \rvert$ and $\log \lvert G \rvert$ is easy to obtain in these cases.

\begin{theorem}
Let $H=R+ZGZ^T$, $C$ be the coefficient matrix in the mixed model equation \eqref{eq:mme}, then
\[
\log \lvert H\rvert +\log \lvert XH^{-1}X^T \rvert = \log \lvert C \rvert +\log \lvert R \rvert +\log \lvert G\rvert.
\]
\end{theorem}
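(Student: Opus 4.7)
The plan is to reduce the claim to two determinantal identities for block and low-rank updates, both of which follow from Schur complements. The key observation is that $C$ has the block form
$$C=\begin{pmatrix} X^{T}R^{-1}X & X^{T}R^{-1}Z\\ Z^{T}R^{-1}X & Z^{T}R^{-1}Z+G^{-1}\end{pmatrix}=\begin{pmatrix}A & B\\ B^{T} & D\end{pmatrix},$$
and the proof of Lemma \ref{lem:C1} already verifies that $A-BD^{-1}B^{T}=X^{T}H^{-1}X$, where $D=Z^{T}R^{-1}Z+G^{-1}$.

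First I would apply the standard Schur-complement determinant identity to $C$ with respect to the $(2,2)$-block $D$, obtaining
$$|C|=|D|\cdot |A-BD^{-1}B^{T}|=|D|\cdot |X^{T}H^{-1}X|.$$
Taking logs gives $\log|C|=\log|D|+\log|X^{T}H^{-1}X|$, so the theorem reduces to showing
$$\log|H|=\log|D|+\log|R|+\log|G|.$$

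Next I would establish this determinantal identity for $H=R+ZGZ^{T}$. The natural route is a second Schur-complement expansion, or equivalently the matrix-determinant (Weinstein--Aronszajn) lemma. Writing
$$|H|=|R+ZGZ^{T}|=|R|\cdot |I_{b}+GZ^{T}R^{-1}Z|,$$
and then factoring $I_{b}+GZ^{T}R^{-1}Z=G\bigl(G^{-1}+Z^{T}R^{-1}Z\bigr)=G\cdot D$, one gets
$$|H|=|R|\cdot|G|\cdot|D|,$$
and substituting back into the expression for $\log|C|$ delivers the claim.

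The main obstacle is bookkeeping rather than mathematical: making sure the order of factors is correct when moving $G$ in and out of $|I_{b}+GZ^{T}R^{-1}Z|$, and that the Schur-complement identity is applied with the block that is actually invertible (here $D$ is positive definite because $Z^{T}R^{-1}Z$ is PSD and $G^{-1}$ is PD, so the step is legitimate). Once both determinantal identities are in hand, the theorem follows by simple addition of logs, and no further manipulation is required.
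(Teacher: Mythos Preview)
Your proposal is correct and follows essentially the same route as the paper: a Schur-complement expansion of $|C|$ with respect to the $(2,2)$-block $D=Z^{T}R^{-1}Z+G^{-1}$, followed by a second determinantal identity relating $|H|$, $|R|$, $|G|$ and $|D|$. The only cosmetic difference is that the paper obtains the second identity by computing the determinant of the auxiliary block matrix $\bigl(\begin{smallmatrix} R & Z\\ -Z^{T} & G^{-1}\end{smallmatrix}\bigr)$ in two ways, whereas you invoke the Weinstein--Aronszajn (matrix-determinant) lemma directly and factor $I_{b}+GZ^{T}R^{-1}Z=G\,D$; these are the same Schur-complement computation in different packaging.
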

\begin{proof}
Consider block elimination of the matrix $C$
\[
\begin{pmatrix}
A & B \\
B^T & D
\end{pmatrix}
\begin{pmatrix}
I & 0 \\
-D^{-1}B^T & I
\end{pmatrix}
=
\begin{pmatrix}
S & A \\
0& D
\end{pmatrix}
\]
where $S= A-BD^{-1}B^T=X^TH^{-1}X $ according to Lemma \ref{lem:C1}, $D=G^{-1} +Z^TR^{-1}Z$.
Therefore we have
\begin{equation}
\log\lvert C\rvert = \log\lvert X^TH^{-1}X \rvert +\log \vert G^{-1}+Z^TR^{-1}Z \rvert. \label{eq:LR2}
\end{equation}
Then consider the block elimination of the following matrix
\[
\begin{pmatrix}
R & Z \\
-Z^T & G^{-1}
\end{pmatrix}
\begin{pmatrix}
I & 0 \\
GZ^T & I
\end{pmatrix}
=\begin{pmatrix}
R+ZGZ^T & R \\
0 & G^{-1}
\end{pmatrix}=\begin{pmatrix}
H & G  \\
0 & G^{-1}
\end{pmatrix}.
\]
Similarly, we have
\[
\begin{pmatrix}
I & 0 \\
Z^TR^{-1} & I
\end{pmatrix}
\begin{pmatrix}
R & Z \\
-Z^T & G^{-1}
\end{pmatrix}
=\begin{pmatrix}
R & Z\\
0 & G^{-1} + Z^TR^{-1}Z
\end{pmatrix}
\]
Therefore we have
\begin{equation}
\lvert R\rvert \lvert G^{-1}+Z^TR^{-1}Z \rvert =\lvert H\rvert \lvert G^{-1} \rvert. \label{eq:LR3}
\end{equation}
Notice that $\lvert R^{-1} \rvert = \lvert R \rvert^{-1}$, and combine \eqref{eq:LR2} and \eqref{eq:LR3} we can obtain the required result. \qed
\end{proof}

\subsection{Multi-frontal $LDL^T$ factorization}

Factorizing a symmetric positive definite matrix $C$ is
usually done by the Cholesky factorization. This
can be efficiently implemented by the $LDL^T$ factorization,
where $L$ is a unit lower triangular matrix with diagonal
elements 1, $D$ is a diagonal matrix. The classical Cholesky factorization
$C=\tilde{L} \tilde{L}^T$ can be obtained form the $LDL^T$ factors by
setting $\tilde{L}=LD^{1/2}$; in practice, such a transform is not
necessary and the matrix $D$ can be stored in the diagonal part of
the matrix $L$ to save memory.

The $LDL^T$ factorization algorithm
can be derived from the following well known formula which we shall use
to derive the inversion formula.
\begin{linenomath}
\begin{align}
C=\left(
\begin{array}{cc}
\alpha &  a^T \\
a & \hat{C} \\
\end{array}
\right)&=
 \left(\begin{array}{cc}
 1 &    \\
\ell & I \\
\end{array} \right)
\left(
\begin{array}{cc}
\alpha &  \\
 & S \\
\end{array}
\right)
\left(
\begin{array}{cc}
1 & \ell^T \\
 & I \\
\end{array}
\right)  \label{eq:ldl1}\\
&=\left(\begin{array}{cc}
 1 &    \\
\ell & \hat{L} \\
\end{array} \right)
\left(
\begin{array}{cc}
\alpha &  \\
 & \hat{D} \\
\end{array}
\right)
\left(
\begin{array}{cc}
1 & \ell^T \\
 & \hat{L}^T \\
\end{array}
\right)
\label{eq:ldl2},
\end{align}
\end{linenomath}
where $\ell = a/\alpha$, $S=\hat{L}\hat{D}\hat{L}^T=\hat{C}-a
a^T/\alpha=\hat{C}-\ell a^T$. $S$ is usually
refereed as the \emph{Schur complement}. By recursively using the formula
\eqref{eq:ldl1} $n-1$ times to these Schur complements generated in the
process, one can derive one algorithm for the $LDL^T$
factorization.

Suppose there are $m_1$ elements the first column of $L$, this
includes the first diagonal element of $D$ which can be saved in
the diagonal position of $L$, then computing $\ell$ in
\eqref{eq:ldl1} requires  $m_1-1$ floating point
operations (FLOPS); with consideration of the symmetry,
updating the Schur complement $S$ requires $m_1(m_1-1)$
FLOPs. Let $m_i$,
$i=1,2,\ldots,n$ be the number of
non-zero elements of in the
$i$-th column of $L$. Then computing the $L$ and $D$ factors for
the $LDL^T$ factorization requires
\begin{equation}
 \sum_{i=1}^{n} m_i^2 -n
\label{eq:cldl}
\end{equation}
floating point operations. Therefore the FLOPS required for the $LDL^T$ factorization depend on the sparsity of the factor $L$. The non-zero pattern of $L$ and column counts $m_i$ can be
analysed by \textit{elimination tree} \cite[p.56]{Tim06}\cite{liu90} through symbolic $LDL^T$ factorization. This can be used to analyse the data flow dependence of the numerical $LDL^T$ factorization.

The worst case for the formula \eqref{eq:ldl1}
and \eqref{eq:ldl2} arises when the first column of $C$ is
dense; in such a case, the out product $aa^T$ is
dense and it results in a dense Schur component $S$, thus
one can never get benefit of the sparsity of $C$. To avoid such cases, a \emph{fill-in} reducing algorithm is required.
Commonly used fill-in reducing methods are minimum degree ordering \cite{GL89},
nested dissection ordering \cite{G73}, and the approximate minimum degree(AMD)
ordering \cite{ADD96,ADD04}. An efficient implement of these techniques is available in the $LDL^T$ factorization package \cite{alg849}.

\section{Discussion}

\begin{table*}[!t]
\centering
\begin{tabular}{r|*{10}{r}}
\hline
Data \newline Set&y	& c & v & y.c	& y.v&  v.c	& units &
v/y &	 y/v	& c.v \\
\hline
P1&	12&	22&	130&	132&	673&	2518&
6667&
56.1&	5.2&	10 \\
P2&	15&	25&	160&	180&	888&	3527&
9595&
59.2&	5.6&	10 \\
P3&	22&	25&	188&	264&	1177&	4215&
12718&
53.5&	6.3&	12 \\
P4&	25&	25&	262&	300&	1612&	5907&
17420&
64.5&	6.2&	12\\
P5&	25&	25&	390&	300&	2345&	8625&
25334&
93.8&	6.0&	15 \\
P6&	25&	35&	390&	425&	2345&	12249&
35887&
93.8&	6.0&	15\\
P7&	30&	35&	470&	510&	3013&	15087&
46113&
100.4&	6.4&	20\\
P8&	30&	35&	620&	510&	3835&	19737&
58685&
127.8&	6.2&	20\\
P9&	35&	40&	720&	700&	4522&	26432&
81396&
129.2&	6.3&	20\\
P10&	40&	50&	820&	1000&	5262&	37701&
118403&	131.6&	6.4&	20 \\
\hline
\end{tabular}
\caption{Data sets for the benchmark problem. There column
titles are  the number of years (y), the number of centres (c), the
number of varieties(v), thes number of levels of cross terms(y.c
, y.v,  v.c),  the average varieties per year (v/year), and the
averages year per variety (y/v), and  the number of controlled varieties
all year (control varieties all year).}
\label{tab:bench1}
\end{table*}
The paper details that the elements of the Hessian matrix of the log-likelihood function can be computed by solving the mixed model equations.
Matrix transforms play an important role in splitting the average Jacobian matrices of the score function. Such a splitting results in a simple approximated Jacobian matrix which reduces computations
form four matrix-matrix multiplications to four matrix-vector multiplications. This significantly reduces the time for evaluating the Jacobian matrix in the Newton method.  The problem of evaluating the Jacobian matrix of the score function finally is reduced to solving the mixed model equations \eqref{eq:mme} with multiple right hand sides. At the end of the day, an efficient sparse factorization method plays a crucial role in evaluation the Hessian matrix of the log-likelihood function.

Finally, we demonstrate an variety trial problem to illustrated the current performance in of the factorization method for the mixed model equation.
These examples are based on a second-stage analysis of a set of
variety trials, i.e. based on variety predicted values from each
trial. Trials are conducted in a number of years across a number
of locations (centres). See Table \ref{tab:bench1}.
They are sampled at
random, and the life of each variety is generated from a Poisson
distribution. This gives a three-way crossed structure
(year*variety*site) with some imbalance. In the current model, all
terms except a grand mean are fitted as random. The random terms
are generated as independent and identically distributed normal
distribution with variance components generated
from a test program.
\begin{figure}[!h]
\centering
 \subfloat[ \label{Csub1}] {\includegraphics[width=0.24\textwidth]{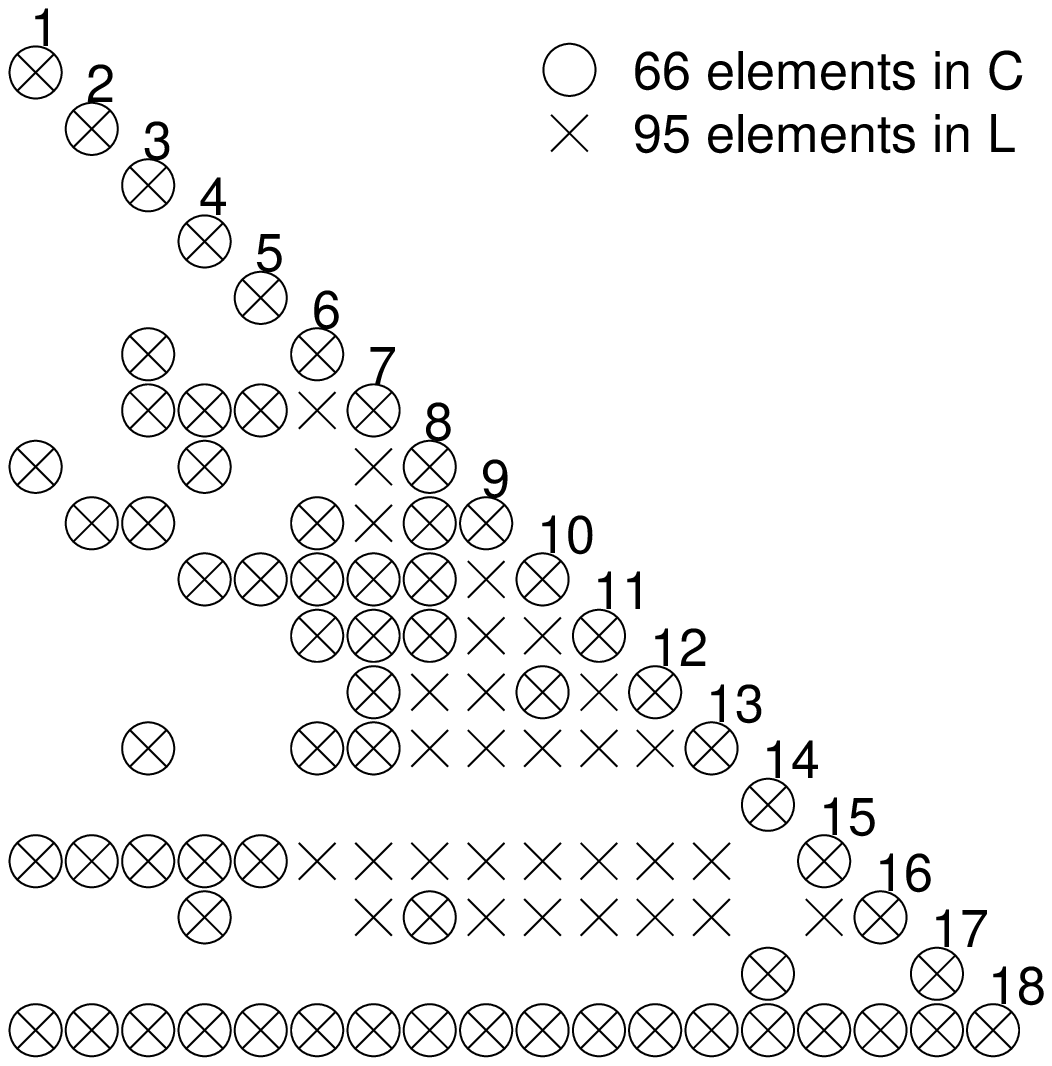}}
 \subfloat[\label{Csub1btree}] {\includegraphics[width=0.24\textwidth]{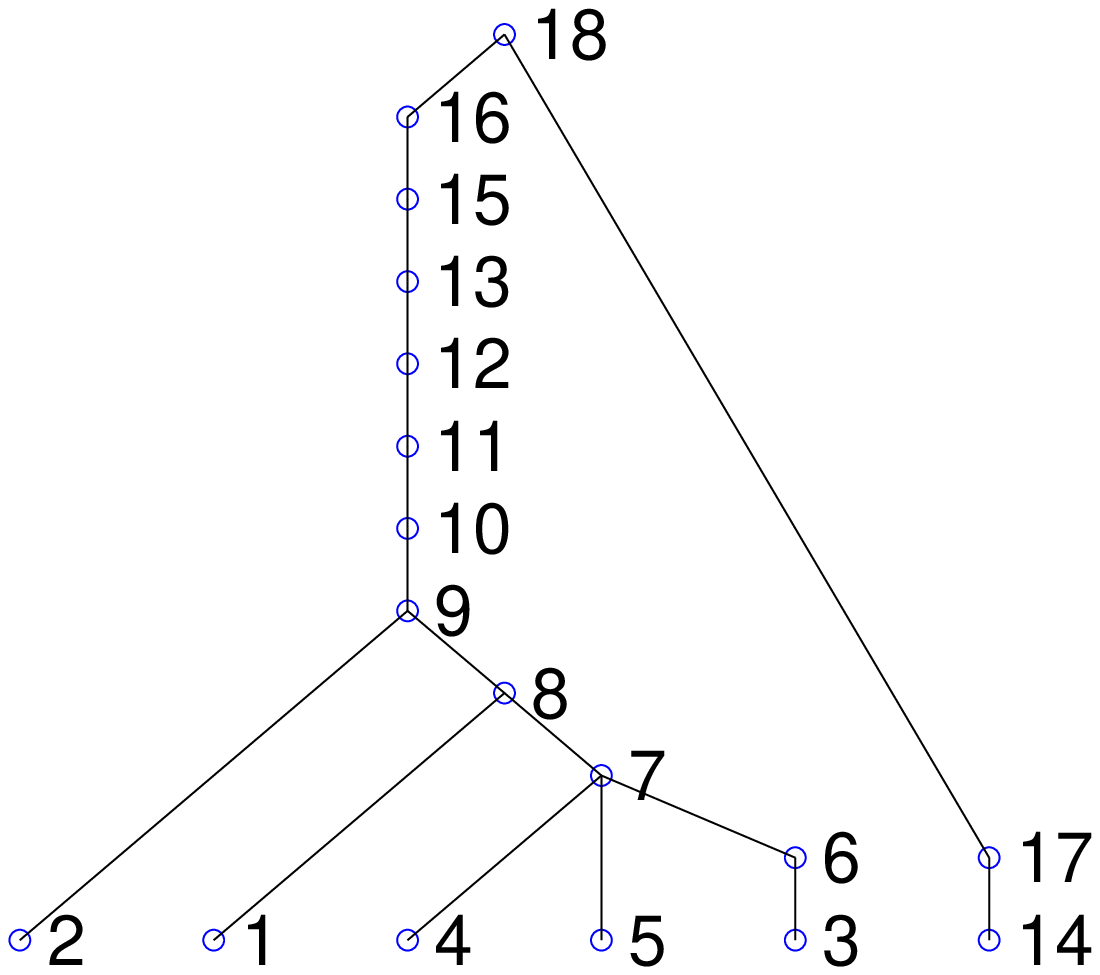}}

 \subfloat[ \label{Csub2}] {\includegraphics[width=0.24\textwidth,]{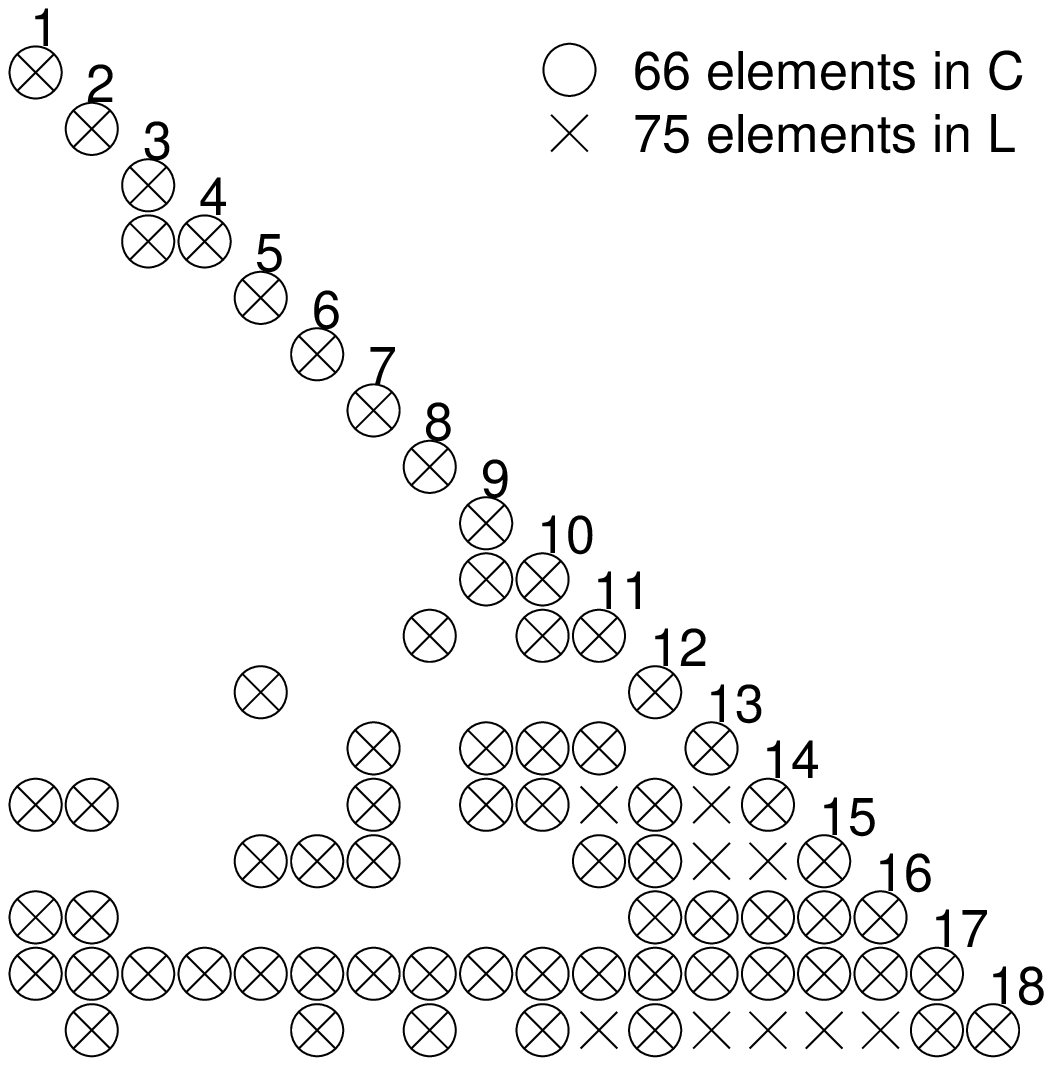}}
 \subfloat[\label{Csub2tree} ] {\includegraphics[width=0.24\textwidth,]{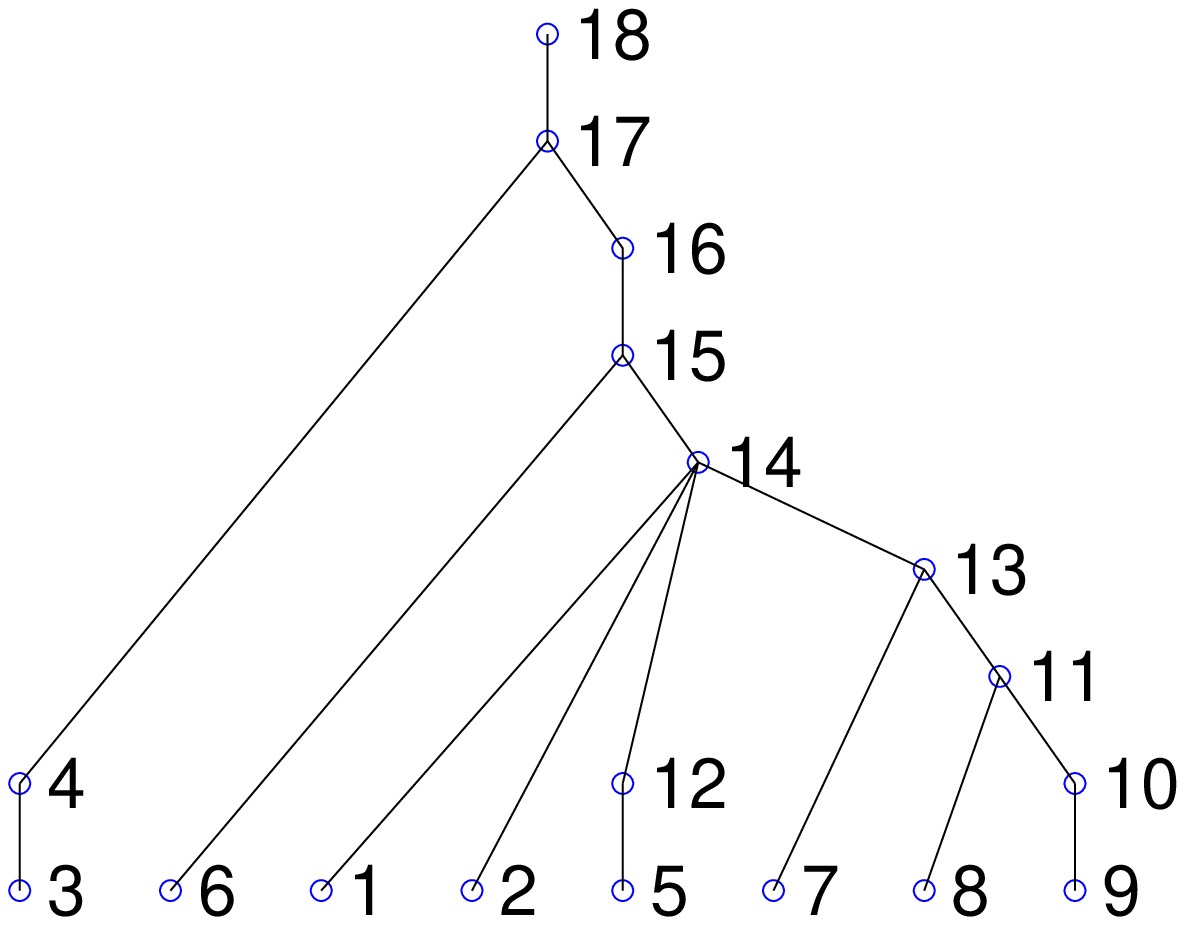}}

\caption{The fill-in reducing ordering and its effects.
Window \protect\subref{Csub1} demonstrates the sparse pattern of  a matrix
$C$ and its $LDL^T$ factors $L$. The matrix $C$ is a sub matrix
extract from a bench mark problem of the linear mixed model.
Window \protect\subref{Csub1btree} shows the elimination tree for the
$LDL^T$ factorization of the matrix in \protect\subref{Csub1}; the
factorization starts from leaf nodes (column 1, 2, 3, 4, 5 and 14
in the matrix in \protect\subref{Csub1}) and finishes at the root node
(column 18 in the matrix in \protect\subref{Csub1}). The factorization of
a parent column depends on the data in its children columns. The
height of the tree shows the sequential steps in the
factorization. The width of the tree shows the possible maximum
parallelism.  Window \protect\subref{Csub2} illustrates the approximate
minimum degree ordering of the matrix in Window \protect\subref{Csub1}
and the $LDL^T$ factor corresponding the the AMD ordering.  After
the AMD ordering, there are only 9 fill-ins while the are 29
fill-ins in \protect\subref{Csub1}. Windows \protect\subref{Csub2tree}
demonstrates the elimination tree for the the AMD ordering, the
tree is shorter and wider than that in \protect\subref{Csub1btree}.
}
\label{fig:order}
\end{figure}

\begin{table}[!thb]
 \centering
\begin{tabular}{c|r|rcc|rcr}
\hline
Prob & No. & \multicolumn{3}{ c |}{$C$} & \multicolumn{3}{ c
}{$L$} \\
 & effects & nnz & $n_z$ & $\rho_C $& nnz & $n_z$ &
$\rho_L$  \\
\hline
P01 & 3488 & 56946 & 16.3 & 9.4 & 112618   & 32.3 & 18.5 \\
 P02 & 4796 & 80946 & 16.9 & 7.0 & 172023   & 35.9 & 15.0 \\
 P03 & 5892 & 105059 & 17.8 & 6.1 & 273315   & 46.4 & 15.7 \\
 P04 & 8132 & 144240 & 17.7 & 4.4 & 377761   & 46.5 & 11.4  \\
 P05 & 11711 & 209235 & 17.9 & 3.1 & 507711   & 43.4 & 7.4  \\
 P06 & 15470 & 291318 & 18.8 & 2.4 & 718701   & 46.5 & 6.0  \\
 P07 & 19146 & 370799 & 19.4 & 2.0 & 1020414   & 53.3 & 5.6  \\
 P08 & 24768 & 473891 & 19.1 & 1.5 & 1196903   & 48.3 & 3.9  \\
 P09 & 32450 & 648237 & 20.0 & 1.2 & 1779662   & 54.8 & 3.4 \\
 P10 & 44874 & 932054 & 20.8 & 0.9 & 2817463   & 62.8 & 2.8  \\
\hline
\end{tabular}
\caption{Symbolic analysis of the $LDL^T$ factorization and
selected inversion. nnz is the number of non-zero elements in
$C$ and $L$, where only the lower triangular part of $C$ is
stored. $n_z$ is the average non-zero elements per column in the
low triangular matrices.  $\rho_L$ is the ratio of
non-zero elements in the low triangular matrices (in per thousand). The FLOPs counts
for $LDL^T$ are computed according to the formula \eqref{eq:cldl}¡£}
\end{table}
\begin{table}[!thb]
 \centering
\begin{tabular}{r|r|rr}
\hline
 Prob & AMD & $LDL^T$& FLOPS count  \\
\hline
P1 & 0.0121 & 0.0093 &8943842  \\
P2 & 0.0197 & 0.0127 & 17175555  \\
P3 & 0.0266 & 0.0173 & 40768817  \\
P4 & 0.0433 & 0.0244  & 60714709 \\
P5 & 0.0633 & 0.0440 & 75897428 \\
P6 & 0.0845 & 0.0505 &149074099  \\
P7 & 0.1177 & 0.0679 &270835518  \\
P8 & 0.1741 & 0.0854 & 290699965 \\
P9 & 0.2507 & 0.1280 &600925570 \\
P10 & 0.4010 & 0.8054&  1391099157 \\
\hline
\end{tabular}
\caption{Timing results for the benchmark problem. Column AMD is
the time for the approximate degree ordering time. Column $LDL^T$ is
the timing for the $LDL^T$ factorization.
}
\label{tab:bench}
\end{table}

\section*{Appendices}
\appendix
\newtheorem{thm}[section]{Theorem}
\begin{thm}
Let $X\in \mathbb{R}^{n\times p}$ be full rank and $P_X=X(X^TX)^{-1}X^T$,
then there exist an orthogonal matrix $K=[K_1, K_2]$, such that
\begin{enumerate}
  \item $P_X=K_1K_1^T$;
  \item $I-P_X=K_2K_2^T$.
\end{enumerate}
\label{thm:a}
\end{thm}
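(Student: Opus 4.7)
The plan is to exploit the fact that $P_X$ is an orthogonal projection matrix, together with the QR decomposition (or equivalently, the spectral theorem).

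First I would observe the three defining properties of $P_X$: it is symmetric ($P_X^T = P_X$), idempotent ($P_X^2 = P_X$, verified directly by cancellation of $(X^TX)^{-1}X^TX$), and of rank $p$ (since $X$ is full column rank). These properties identify $P_X$ as the orthogonal projector onto the column space of $X$.

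Next I would construct $K_1$ and $K_2$ explicitly. Take a thin QR decomposition $X = K_1 R$, where $K_1 \in \mathbb{R}^{n\times p}$ has orthonormal columns ($K_1^T K_1 = I_p$) and $R \in \mathbb{R}^{p\times p}$ is invertible. Substituting into $P_X$ gives
\begin{equation*}
P_X = K_1 R (R^T K_1^T K_1 R)^{-1} R^T K_1^T = K_1 R R^{-1} R^{-T} R^T K_1^T = K_1 K_1^T,
\end{equation*}
which establishes the first identity. Then extend $K_1$ to a full orthogonal matrix $K = [K_1, K_2] \in \mathbb{R}^{n\times n}$ by choosing $K_2 \in \mathbb{R}^{n\times (n-p)}$ whose columns form an orthonormal basis for the orthogonal complement of the column space of $X$; this is always possible (Gram--Schmidt, or by completing to an orthonormal basis of $\mathbb{R}^n$).

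Finally, I would exploit the identity $KK^T = K_1 K_1^T + K_2 K_2^T = I_n$, which follows from $K$ being orthogonal. Rearranging and using part (1) yields
\begin{equation*}
K_2 K_2^T = I_n - K_1 K_1^T = I_n - P_X,
\end{equation*}
proving the second identity. The only subtle step is verifying that the extension $K_2$ exists with the required orthogonality to $K_1$, but this is routine because $K_1$ has full column rank $p \le n$, so its column span admits an orthogonal complement of dimension $n-p$. No obstacle is expected; the result is essentially a restatement of the spectral decomposition of an orthogonal projector.
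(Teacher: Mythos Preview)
Your proof is correct. The main difference from the paper is in how $K_1$ is obtained: the paper appeals to the spectral theorem, noting that $P_X$ is symmetric idempotent with eigenvalues $0$ and $1$, and takes $K_1$ to be an orthonormal eigenbasis for the eigenvalue~$1$; you instead obtain $K_1$ directly from a thin QR factorization of $X$ and verify $P_X=K_1K_1^T$ by substitution. Both arguments then finish identically via $KK^T=I$. Your route is more constructive and avoids any discussion of eigenvalues; the paper's route makes the eigenstructure of $P_X$ explicit and, as a by-product, derives $K_2^TX=0$ from the orthogonality of eigenvectors for distinct eigenvalues (a fact used later in the appendix). In your construction this same fact follows immediately because the columns of $K_2$ are chosen orthogonal to $\operatorname{col}(K_1)=\operatorname{col}(X)$.
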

\begin{proof}
It is easy to verify that $P_x$ is an
symmetric \textit{projection/idempotent matrix}, i.e.
$$P_X^T=P_X,\quad P_X^2=P_X.$$ Since $P_X(I-P_X)=0$, the eigenvalues of
$P_X$ are 1 and 0. There exists an orthogonal matrix $K=(K_1,
K_2)$, $K\in \mathbb{R}^{n\times n}$, $K_1\in
\mathbb{R}^{n\times p}$, and $K_2\in \mathbb{R}^{n\times (n-p)}$
such that
\begin{equation}
P_X=(K_1, K_2) \begin{pmatrix}
I_p & 0 \\ 0 & 0
\end{pmatrix}
\begin{pmatrix}
K_1^T \\K_2^T
\end{pmatrix}=K_1K_1^T.
\end{equation}
One can show that there are exactly $p$ eigenvalues with 1.

Equivalently,
 \begin{equation}
P_X(K_1, K_2)=(K_1, K_2)\begin{pmatrix}
I_p & 0 \\ 0 & 0
\end{pmatrix}.
\end{equation}
It is clear that each column of $K_1$($K_2$) is an eigenvector of $P_X$ corresponding
to the eigenvalue 1(0). Further, one can verify that $P_XX=X$, i.e., each of the column of $X$
is an eigenvector corresponding to $1$. Since eigenvectors corresponding to different
eigenvalues are orthogonal, we have
\begin{equation}
K_2^TX=0.
\end{equation}
Further, one can verify that
\begin{align*} I&=(KK^T)(KK^T)=(K_1, K_2)\begin{pmatrix} K_1^T\\K_2^T\end{pmatrix}(K_1,K_2)\begin{pmatrix} K_1^T \\K_2^T \end{pmatrix} \\
 &=(K_1,K_2)\begin{pmatrix} K_1^T K_1  & 0 \\
 0 & K_2^TK_2 \end{pmatrix} \begin{pmatrix}  K_1^T
 \\K_2^T
 \end{pmatrix}\\
 &=K_1(K_1^TK_1)K_1^T+K_2(K_2^TK_2)K_2^T=K_1K_1^T+K_2K_2^T.
\end{align*}
We have
\begin{equation}
 I-P_X=K_2K_2^T.   \label{eq:K2K2}
 \end{equation}
  \flushright \qed
\end{proof}
\begin{thm}
Let $X\in \mathbb{R}^{n\times p}$ and $\rank{X}=p$, $p<n$. Then
there exist nonsingular matrices $L=[L_1, L_2]$, such that
$L_1^TX=I_{p\times p}$, $L_2^TX=0_{(n-p)\times p}$.
\label{thm:L}
\end{thm}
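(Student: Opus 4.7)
The plan is to build $L$ explicitly, choosing $L_1$ so that the identity condition $L_1^T X = I_p$ falls out algebraically, and drawing $L_2$ from the orthogonal construction already provided by Theorem \ref{thm:a}.

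First, I would set $L_1 := X(X^T X)^{-1}$. Since $X$ has full column rank $p$, the Gram matrix $X^T X \in \mathbb{R}^{p\times p}$ is invertible, so $L_1$ is well defined and lies in $\mathbb{R}^{n\times p}$. A direct computation then gives
\begin{equation*}
L_1^T X = (X^T X)^{-1} X^T X = I_p,
\end{equation*}
which settles the first condition.

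Next, I would take $L_2 := K_2$, where $K_2 \in \mathbb{R}^{n\times (n-p)}$ is the orthogonal block supplied by Theorem \ref{thm:a}. That theorem establishes $K_2^T X = 0$, so the requirement $L_2^T X = 0_{(n-p)\times p}$ is immediate.

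The main obstacle, and the only nontrivial piece, is verifying that $L = [L_1,\,L_2]$ is nonsingular as an $n\times n$ matrix. The idea is to argue by complementary subspaces. The columns of $L_1 = X(X^T X)^{-1}$ span the same $p$-dimensional subspace as the columns of $X$, namely $\operatorname{range}(X)$, because right-multiplication by the invertible matrix $(X^T X)^{-1}$ does not change the column space. On the other hand, Theorem \ref{thm:a} gives $I - P_X = K_2 K_2^T$ with $K_2$ having orthonormal columns, so the columns of $L_2 = K_2$ form a basis for $\operatorname{range}(X)^\perp$, an $(n-p)$-dimensional subspace. Since $\operatorname{range}(X) \oplus \operatorname{range}(X)^\perp = \mathbb{R}^n$, stacking the two bases produces $n$ linearly independent columns, so $L$ is invertible. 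This completes the construction.
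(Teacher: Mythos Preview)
Your proof is correct. Both your argument and the paper's rely on Theorem~\ref{thm:a} to supply $K_2$, but the constructions run in opposite directions. The paper first shows that $[X,\,K_2B^T]$ (for any invertible $B$) has independent columns, then \emph{defines} $L^T:=[X,\,K_2B^T]^{-1}$ and reads off $L_1^TX=I_p$ and $L_2^TX=0$ from the block identity $L^T[X,\,K_2B^T]=I$. You instead write down $L_1=X(X^TX)^{-1}$ and $L_2=K_2$ explicitly, verify the two conditions by direct computation, and then establish invertibility via the complementary-subspace argument. Your choice is exactly the paper's construction with $B=I$, since $[X(X^TX)^{-1},\,K_2]^T[X,\,K_2]=I$ follows from $K_2^TX=0$ and $K_2^TK_2=I_{n-p}$; what you gain is a concrete formula for $L$, while the paper's version emphasises that $L_2$ is only determined up to an invertible right factor.
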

\begin{proof}
Let $B\in \mathbb{R}^{(n-p)\times (n-p)}$ is any nonsingular
matrix and $K_2K_2^T=I-P_X$ be defined in \eqref{eq:K2K2}. Then
$BK_2^TX=0$ ($K_2B^T\in \ker{X^T}$) and $\rank=K_2B^T=n-p$.
Therefore the columns of $\{X, K_2B^T\}$ forms a set of basis of
$\mathbb{R}^{n\times n}$. Denote $L^T=[X, K_2B^T]^{-1}$, then
use the identy $L^T[X,K_2B^T]=I$ , we have
\begin{equation}
\begin{pmatrix} L_1^TX  & L_1^TK_2B^T \\L_2^TX & L_2^TK_2B^T
\end{pmatrix}
=
\begin{pmatrix}
I_{p\times p} &0\\0&I_{(n-p)\times(n-p)}
\end{pmatrix}
\end{equation}
\flushright \qed
\end{proof}
\newcommand{\R}{\mathbb{R}}
\begin{thm}
Let $X\in \mathbb{R}^{n\times p}$ and $\rank{X}=p$. For any full
rank matrix $L_2\in \R^{n\times(n-p)}$, and $L_2^TX=0$, we have
\begin{equation}
I-P_X=L_2(L_2^TL_2)^{-1}L_2^T,
\end{equation}
where $P_X=X(XX)^{-1}X^T$. \label{thm:p2}
\end{thm}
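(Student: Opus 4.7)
The plan is to identify $L_2(L_2^T L_2)^{-1} L_2^T$ as the orthogonal projector onto $\operatorname{col}(L_2)$, show that this subspace coincides with $\operatorname{col}(X)^\perp$, and then invoke Theorem~\ref{thm:a} which already established that $I - P_X$ is the orthogonal projector onto $\operatorname{col}(X)^\perp$. Because the orthogonal projector onto a subspace is unique, the two expressions must agree.

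Concretely, I would proceed in three short steps. First, I would verify that $Q := L_2(L_2^T L_2)^{-1} L_2^T$ is well defined (using the fact that $L_2$ has full column rank $n-p$, so $L_2^T L_2$ is invertible) and that $Q$ is symmetric and idempotent, hence an orthogonal projector. Second, I would observe that $\operatorname{col}(Q) = \operatorname{col}(L_2)$, and then argue that $\operatorname{col}(L_2) = \operatorname{col}(X)^\perp$: the inclusion $\operatorname{col}(L_2) \subseteq \operatorname{col}(X)^\perp$ follows directly from $L_2^T X = 0$, and a dimension count ($\dim \operatorname{col}(L_2) = n-p = \dim \operatorname{col}(X)^\perp$) upgrades it to equality. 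Third, I would invoke Theorem~\ref{thm:a}, which gives $I - P_X = K_2 K_2^T$ with $\operatorname{col}(K_2) = \operatorname{col}(X)^\perp$, so both $Q$ and $I - P_X$ are the orthogonal projector onto the same subspace and must be equal.

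An equally clean alternative, which avoids appealing to uniqueness of orthogonal projectors, is a direct computation. Since the columns of $L_2$ lie in $\operatorname{col}(K_2)$ and both matrices have rank $n-p$, there is a nonsingular $M \in \mathbb{R}^{(n-p)\times(n-p)}$ with $L_2 = K_2 M$. Using $K_2^T K_2 = I_{n-p}$,
\begin{equation*}
L_2(L_2^T L_2)^{-1} L_2^T
= K_2 M (M^T M)^{-1} M^T K_2^T
= K_2 K_2^T = I - P_X,
\end{equation*}
where the last equality is \eqref{eq:K2K2}.

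I do not anticipate a serious obstacle; the only subtle point is justifying that $L_2^T L_2$ is invertible, which is immediate from $L_2$ having full column rank, and that $L_2 = K_2 M$ with $M$ nonsingular, which follows from comparing column spaces and ranks. Everything else is routine linear algebra, and both proof strategies fit in a few lines.
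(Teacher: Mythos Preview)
Your proposal is correct, and both of your strategies work cleanly. However, the paper takes a genuinely different and somewhat more direct route that does not appeal to Theorem~\ref{thm:a} at all. The paper sets $B=[X,\,L_2]\in\mathbb{R}^{n\times n}$, observes that its columns are linearly independent (since $\rank X=p$, $\rank L_2=n-p$, and $L_2^TX=0$ forces $\operatorname{col}(X)\cap\operatorname{col}(L_2)=\{0\}$), and then uses the trivial identity $B(B^TB)^{-1}B^T=I$ valid for any invertible square matrix. Because $L_2^TX=0$, the block matrix $B^TB$ is block diagonal with blocks $X^TX$ and $L_2^TL_2$, so expanding $B(B^TB)^{-1}B^T$ immediately gives $X(X^TX)^{-1}X^T+L_2(L_2^TL_2)^{-1}L_2^T=I$, i.e.\ $I-P_X=L_2(L_2^TL_2)^{-1}L_2^T$.

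The trade-off: your argument is more conceptual (it identifies both sides as the orthogonal projector onto $\operatorname{col}(X)^\perp$ and invokes uniqueness, or realizes $L_2=K_2M$ explicitly), but it leans on the earlier result \eqref{eq:K2K2}. The paper's argument is entirely self-contained and arguably slicker, extracting the identity from a single block-matrix inversion without any prior spectral or projector machinery.
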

\begin{proof}
Let $B=[X,L_2]$. Since the columns of B is linear independent,
therefore we have the identity
$I=BB^{-1}B^{-T}B^T=B(B^TB)^{-1}B^T$.
\begin{equation}
I=(X, L_2)\begin{pmatrix} X^TX & X^TL_2 \\
L_2^TX & L_2^TL_2
\end{pmatrix}^{-1}\begin{pmatrix} X^T \\ L_2^T
\end{pmatrix}.
\end{equation}
Use $L2^TX=0$, we have $P_X+L_2(L_2^TL_2)^{-1}L_2^T=I$.  \flushright \qed
\end{proof}

\begin{thm} Let $H\in \R^{n \times n}$ be a symmetric positive definite
matrix. $X\in \R^{n\times p}$ and $L=[L_1, L_2]\in \R^{n\times
n}L_2$ such that $L_1^TX=I_p$, $L_2^TX=0$, then
\begin{equation}
P_X^H=L_2(L_2^THL_2)^{-1}L_2^T,
\label{eq:L2HL2}
\end{equation}
where $P_X^H=H^{-1}-H^{-1}X(X^TH^{-1}X)^{-1}X^TH^{-1}$,
and
 \begin{equation}
 (X^TH^{-1}X)^{-1}=L_1^THL_1-L_1^{T}HL_2(L_2^THL_2)^{-1}L_2^THL_1.
 \label{eq:XHX}
 \end{equation}
 \label{thm:d}
\end{thm}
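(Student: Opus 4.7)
Both identities will follow from the block structure of $L^T H L$ coupled with the relations $L_1^T X = I_p$ and $L_2^T X = 0$. I plan to establish \eqref{eq:XHX} first via a Schur-complement argument, and then verify \eqref{eq:L2HL2} by checking that the two sides agree on a natural basis of $\mathbb{R}^n$.

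For \eqref{eq:XHX}, the starting observation is that, since $L=[L_1,L_2]$ is nonsingular and
\[
L^T X = \begin{pmatrix} L_1^T X \\ L_2^T X \end{pmatrix} = \begin{pmatrix} I_p \\ 0 \end{pmatrix},
\]
one has $X = L^{-T}\begin{pmatrix} I_p \\ 0 \end{pmatrix}$, so
\[
X^T H^{-1} X = \begin{pmatrix} I_p & 0 \end{pmatrix}(L^T H L)^{-1}\begin{pmatrix} I_p \\ 0 \end{pmatrix},
\]
i.e., $X^T H^{-1} X$ is the $p\times p$ leading block of $(L^T H L)^{-1}$. Writing $L^T H L$ as a $2\times 2$ block matrix with diagonal blocks $L_1^T H L_1$ and $L_2^T H L_2$, the standard block-inversion formula identifies this leading block with $S^{-1}$, where $S = L_1^T H L_1 - L_1^T H L_2 (L_2^T H L_2)^{-1} L_2^T H L_1$. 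Inverting both sides then yields \eqref{eq:XHX}.

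For \eqref{eq:L2HL2}, set $Q = L_2(L_2^T H L_2)^{-1} L_2^T$ and $P = P_X^H$. My plan is to show $Q=P$ by evaluating both on the basis of $\mathbb{R}^n$ formed by the columns of $X$ together with the columns of $HL_2$. Linear independence of $\{X,\,HL_2\}$ comes from left-multiplying $Xa + HL_2 b = 0$ by $L_2^T$, which forces $(L_2^T H L_2)b = 0$ and hence $b=0$, after which $a=0$ follows from full column rank of $X$. Direct computation then gives $QX = 0$ (using $L_2^T X = 0$) and $PX = 0$ (by the defining cancellation in $P_X^H$), while $Q(HL_2) = L_2$ and $P(HL_2) = L_2 - H^{-1} X (X^T H^{-1} X)^{-1} X^T L_2 = L_2$, again using $L_2^T X = 0$.

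The only genuinely delicate point is the invertibility of $L_2^T H L_2$, which is needed both for the Schur-complement step and for the very definition of $Q$. This follows from $L_2$ having full column rank $n-p$ as a submatrix of the nonsingular $L$ together with positive definiteness of $H$. Once that is in hand, both halves of the theorem reduce to routine block-matrix algebra, so I do not anticipate any serious obstacle beyond keeping track of the block indices.
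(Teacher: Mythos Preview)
Your proposal is correct, but the route differs from the paper's. For \eqref{eq:L2HL2} the paper argues by a change of variables: writing $H=H^{1/2}H^{1/2}$ and setting $\hat X=H^{-1/2}X$, it observes that $(H^{1/2}L_2)^T\hat X=0$ and then invokes the already-established projector identity $I-P_{\hat X}=(H^{1/2}L_2)\bigl((H^{1/2}L_2)^T(H^{1/2}L_2)\bigr)^{-1}(H^{1/2}L_2)^T$ for the $H=I$ case (the previous appendix theorem), followed by conjugation with $H^{-1/2}$. For \eqref{eq:XHX} the paper substitutes \eqref{eq:L2HL2} into the right-hand side and collapses it using $L_1^TX=I_p$. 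Your argument is more self-contained: you avoid the matrix square root and the auxiliary projector lemma altogether, instead extracting \eqref{eq:XHX} directly as a Schur complement of $L^THL$, and verifying \eqref{eq:L2HL2} by checking that both sides kill $X$ and act as the identity (after cancelling $H$) on $HL_2$. The paper's approach is a bit slicker once the $H=I$ case is in hand, while yours is more elementary and does not depend on any prior result; both are short and neither requires more than the full column rank of $L_2$ and positive definiteness of $H$ to justify the inverses involved.
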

\begin{proof}
Since $H$ is symmetric positive definite, then there exist a
symmetric positive definite $H^{1/2}$.
Let $\hat{X}=H^{-1/2}X$, then for  $\hat{X}\in
\R^{n\times p}$,  $L_2^TH^{1/2} \hat{X}=0$.  According to Theorem
\ref{thm:p2}, we have
\begin{equation}
I-P_{\hat{X}}=H^{1/2}L_2(L_2^THL_2)^{-1}L_2^TH^{1/2}.
\end{equation}
Multiply $H^{-1/2}$ on left and right on both side of the
equation, we obtain
\begin{equation}
H^{-1}-H^{-1}X(X^TH^{-1}X)^{-1}X^TH^{-1} = L_2(L_2^TH^{-1}L_2)^{-1}L_2.
\end{equation}

Using the equation \eqref{eq:L2HL2} on the right hand side of
\eqref{eq:XHX},
we have
\begin{align*}
   &L_1^THL_1- L_1^TH\underbrace{L_2(L_2^THL_2)^{-1}L_2^T}_{=P_X^H}HL_1\\
  =&L_1^THL_1-L_1^T\underbrace{(H-X(X^TH^{-1}X)^{-1}X^T)}_{=HP_X^HH}L_1 \\
  = &
  \underbrace{L_1^TX}_{=I_p}(X^TH^{-1}X)^{-1}\underbrace{X^TL_1}_{=I_p}=(X^TH^{-1}X)^{-1}
\end{align*}
\flushright \qed
\end{proof}

Apply the result in
\eqref{eq:L2HL2} and \eqref{eq:XHX}, we can remove the term
$L_2$ in \eqref{eq:l2},

\begin{thm}
The matrix $P=H^{-1}-H^{-1}X(X^TH^{-1}X)^{-1}X^TH^{-1}$ can also be written
as $R^{-1}-R^{-1}WC^{-1}W^TR^{-1}$,  where $W=(X ,Z)$
\end{thm}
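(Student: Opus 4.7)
The plan is to compute $R^{-1}WC^{-1}W^{T}R^{-1}$ directly by expanding the block form of $C^{-1}$ from Lemma \ref{lem:C1}, and then to identify the combination $R^{-1}-R^{-1}ZD^{-1}Z^{T}R^{-1}$ with $H^{-1}$ via Lemma \ref{lem:H-1}, with $D=G^{-1}+Z^{T}R^{-1}Z$. Once that identification is made, the four-block sum collapses neatly to $P$.

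First I would write
\begin{equation*}
R^{-1}WC^{-1}W^{T}R^{-1}=R^{-1}(X,Z)\begin{pmatrix}C^{XX}&C^{XZ}\\C^{ZX}&C^{ZZ}\end{pmatrix}\begin{pmatrix}X^{T}\\Z^{T}\end{pmatrix}R^{-1}
\end{equation*}
as a sum of four pieces $R^{-1}XC^{XX}X^{T}R^{-1}$, $R^{-1}XC^{XZ}Z^{T}R^{-1}$, $R^{-1}ZC^{ZX}X^{T}R^{-1}$, $R^{-1}ZC^{ZZ}Z^{T}R^{-1}$, and substitute the explicit formulas from Lemma \ref{lem:C1}. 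The two cross terms carry a factor $-R^{-1}ZD^{-1}Z^{T}R^{-1}$, the $(X,X)$ block contributes $R^{-1}XC^{XX}X^{T}R^{-1}$ unchanged, and the $(Z,Z)$ block splits into $R^{-1}ZD^{-1}Z^{T}R^{-1}$ plus a term quadratic in $R^{-1}ZD^{-1}Z^{T}R^{-1}$.

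Next I would recognise that, once $R^{-1}ZD^{-1}Z^{T}R^{-1}$ is written in every term where it appears, the four pieces containing $C^{XX}$ assemble to the symmetric sandwich
\begin{equation*}
(R^{-1}-R^{-1}ZD^{-1}Z^{T}R^{-1})\,XC^{XX}X^{T}\,(R^{-1}-R^{-1}ZD^{-1}Z^{T}R^{-1}).
\end{equation*}
By Lemma \ref{lem:H-1} the outer factors equal $H^{-1}$, and $C^{XX}=(X^{T}H^{-1}X)^{-1}$ by Lemma \ref{lem:C1}, so this sandwich reduces to $H^{-1}X(X^{T}H^{-1}X)^{-1}X^{T}H^{-1}$. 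The remaining lone term $R^{-1}ZD^{-1}Z^{T}R^{-1}$ equals $R^{-1}-H^{-1}$ by Lemma \ref{lem:H-1} again. Adding these contributions,
\begin{equation*}
R^{-1}WC^{-1}W^{T}R^{-1}=H^{-1}X(X^{T}H^{-1}X)^{-1}X^{T}H^{-1}+R^{-1}-H^{-1},
\end{equation*}
and subtracting from $R^{-1}$ yields exactly $P=H^{-1}-H^{-1}X(X^{T}H^{-1}X)^{-1}X^{T}H^{-1}$.

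The main obstacle is purely bookkeeping: correctly tracking signs and transposes across the four blocks, in particular making sure that the two cross terms combine with the $(X,X)$ and $(Z,Z)$ pieces to form the complete symmetric sandwich rather than residuals. Once the factor $R^{-1}-R^{-1}ZD^{-1}Z^{T}R^{-1}$ is pulled out on both sides and Lemma \ref{lem:H-1} is applied, the identity is immediate; no further non-trivial inputs are needed. This also gives the alternative derivation noted in the text: $Py=R^{-1}y-R^{-1}WC^{-1}W^{T}R^{-1}y=R^{-1}(y-X\hat{\tau}-Z\tilde{u})=R^{-1}e$, since $C^{-1}W^{T}R^{-1}y=(\hat{\tau}^{T},\tilde{u}^{T})^{T}$ by the mixed model equation \eqref{eq:mme}.
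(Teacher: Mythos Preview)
Your proposal is correct and follows essentially the same route as the paper's own proof (given in the main text, not the appendix): expand $R^{-1}WC^{-1}W^{T}R^{-1}$ via the block form of $C^{-1}$ from Lemma~\ref{lem:C1}, regroup so that the combination $R^{-1}-R^{-1}ZD^{-1}Z^{T}R^{-1}$ appears, and identify it with $H^{-1}$ by Lemma~\ref{lem:H-1}. Your write-up is in fact slightly cleaner in explicitly recognising the symmetric sandwich $(R^{-1}-R^{-1}ZD^{-1}Z^{T}R^{-1})XC^{XX}X^{T}(R^{-1}-R^{-1}ZD^{-1}Z^{T}R^{-1})$, which the paper's proof arrives at more implicitly and with a couple of sign and transpose typos along the way.
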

\begin{acknowledgements}
The author would like thank Prof Robin Thompson at Rothamsted Research and Dr Sue Welham at VSN international Ltd for introducing the linear mixed model and the AI-REML method.
\end{acknowledgements}

\bibliographystyle{spmpsci}      

\end{document}